\def\T{{\cal T}}
\def\Z{{\mathbb Z}}
\newtheorem{lemma}{Lemma}
\newtheorem{conjecture}{Conjecture}
\begin{document}
\phd

\title{Several topics in Experimental Mathematics}
\author{Andrew Lohr}
\program{Mathematics}
\director{Dr. Zeilberger}
\approvals{4}
\submissionyear{2018}
\submissionmonth{May}

\abstract{

This thesis deals with applications of experimental mathematics to a number of problems. The first problem is related to random graph statistics.We consider a certain class of Galton-Watson random trees and look at the total height statistic. We provide an automated procedure for computing values of the moments of this statistic. Taking limits, we confirm via elementary methods that the limiting (scaled) distributions are all the same.

Next, we investigate several problems related to lattice paths staying below a line of rational slope. These results are largely data-based. Using the generated data, we are able to find recurrences for the number of such paths for the cases of slopes 3/2 and 5/2. There is also investigation of a generalization of these problems to three dimensions.

We also examine generalizations of Sister Celine's method and Gosper's algorithm for evaluating summations. For both, we greatly extend the classes of applicable functions. For the generalization of Sister Celine's method, we allow summations of arbitrary products of hypergeometric terms and linear recurrent sequences with rational coefficients. For the extension of Gosper's algorithm, we extend it from solely hypergeometric sequences to any multi-basic sequence. For both, we have numerous applications to proving, or reproving in an automated way, interesting combinatorial problems.

We also show a partial result related to the bunk bed conjecture, a problem concerning random finite graphs. Let $G$ be a finite graph. Remove edges from $G\square K_2$ independently and with the same probability. In $G\square K_2$, there is an edge placed between all vertices of $G$ and the corresponding vertex in a copy of $G$. Then, label these vertices as either $(v,0)$ or $(v,1)$ for each $v\in V(G)$. The conjecture says that for any $x,y \in V(G)$, it is least as likely to have $(x,0)$ connected to $(y,0)$ as to have $(x,0)$ connected to $(y,1)$. We prove the conjecture in the case that only two of the edges going between the two copes of $G$ are retained.}

\beforepreface

\acknowledgements{

First, I would like to thank my advisor, Dr. Doron Zeilberger, for all of the help that he has provided as I've navigated research in graduate school. He suggested several topics for me to consider, and has always been a great advocate. Coming to Rutgers, I knew that I wanted to do combinatorics, but it took a while to settle on experimental mathematics, and he really sold me on the usefulness of the experimental approach to mathematics. 

I would like to thank my girlfriend and sometimes collaborator, Michelle Bodnar, for all the support that she has offered me through graduate school.

I would like to thank Dr. Michael Saks, not just for helping lay a groundwork of combinatorics coursework knowledge, but also for being supportive during tough times in my third year here at Rutgers.

For the topic of the final chapter of this thesis, I would like to thank Dr. Jeffry Kahn for introducing me to the bunk bed conjecture. I would also like to thank Michelle Bodnar for her advice on how to more clearly explain several of the concepts.

Of course, I would like to thank my parents for their support and encouragement to pursue mathematics.

Lastly, I would like to express my appreciation for all the other combinatorics graduate students here in the graduate program at Rutgers. They have exposed me to many topics in combinatorics I might not of otherwise known about.}

\tablestrue

\figurestrue

\afterpreface

\chapter{Introduction}

The main characteristic of what makes something experimental mathematics is the involvement of a computer at a deep level into discovering new mathematics. This often involves taking some higher level mathematical concepts and rephrasing them in such a way that questions of interest are instead mechanical computations. In this thesis, the basic mechanical computations that we will be reducing the questions of interest to are taking partial derivatives of expressions and solving systems of linear equations. Of course these are, themselves, not trivial operations, but they are tasks that have already been taught to computers.  By rephrasing out problems in terms of things that can be automated, we're able to leverage the fact that computers are faster, cheaper, and less prone to errors when trying to solve the original problems.  In chapter \ref{THS} we are able to convert a combinatorial problem involving trees into a mechanical problem involving multi-variable Calculus and symbolic computation. Another use for computers that shows up in this thesis is to compute many, many values of some sequence, and then perform statistical calculations on this data in order to suggest possible conjectures for future work. This is primarily the content of chapter~\ref{RSP}. That chapter's results are mainly conjectures that are noticed in computed data. In comparison, the other chapters are more able to give techniques for machine made proofs. In chapters \ref{Celine} and \ref{Gosper} we will address problems in summation.

Throughout, there will be some sort of computer based handling of integer sequences in order to analyze their behavior. Recurrence relations will be showing up in abundance. The kinds of recurrences that we will be considering are sometimes called P-recursive recurrences, and they are finite order linear recurrences with rational functions as coefficients. That is, we will have some quantity, either an integer sequence or an expression sequence $x_n$, and will show that it satisfies some 
\[
\sum_{j=0}^N (Q(n)R^j)x_n = 0
\]
where $R$ is the so called ``shift operator'' which is to say that for any sequence $x_n$, $Rx_n$ stands for $x_{n+1}$. We then call $N$ the order of the recurrence. We will sometimes call this whole expression the recurrence, and sometimes we will refer to only $\sum_{j=0}^N (Q(n)R^j)$ as the recurrence that $x_n$ satisfies.

There is a wealth of information on how to analyze a sequence once it is known to satisfy a particular recurrence. So, for our cases, if we can analyze a sequence to the point that we know some such recurrence that is satisfies, we will consider it solved. Since we are often starting summation problems with some undetermined number of terms that is allowed to grow arbitrarily large, any time that such a nice finite description exists, it is a cause for joy. It is not always the case that such a recurrence will exist for all possible sequences that we will consider. Since we greatly expand the classes of problems considered, we lose the completeness results enjoyed by older techniques. All of the recurrences that we consider will be of this form that they can be represented as some polynomial in $n$ and the shift operator times the sequence is equal to zero. There is a large variety of other, far more complicated kinds of recurrences that could define the sequence in question, like nested recurrences. However, not only are these more complicated expressions harder to find, but we are also able to determine less about the sequence automatically once we discover those recurrences. Indeed there are whole papers dedicated to analyzing the behavior of sequences defined by nested recurrences, so we are not so much ``solving'' the problem of analyzing the sequence by finding some way of finding such a recurrence, even though it is also a compact way of describing the sequence.

In chapter \ref{BBed},  we have used computation to obtain numerical information to bolster our confidence before attempting (and eventually finding) a proof. In addition, computation has helped provide insight in how to break down a complicated probability distribution through conditioning on different events. This verification-as-you-go approach has helped steer the direction of the proof, as many reasonable things that could be said about the problem turn out to not actually be true. Being able to rule out related problems by having a computer search for small counter examples has helped immensely. This chapter is the least experimental in its results, instead having proofs worked out by hand. This chapter's results may be able to be expanded, as the proof technique suggests that automatable proofs may be possible for related special cases of the original bunk bed conjecture. There is a purely experimental component though, in that the conjecture was verified to be true for all graphs on six vertices. To each graph, independently randomly removing each edge from $G\square K_2$ also introduces a polynomial in $p$ representing the difference of probabilities in question in the conjecture. This polynomial may be of interest independent of the truth of this conjecture.

 All of the code used for the results here, as well as results of the computations that are too bulky to fit in this document can be found at \newline{\tt http://sites.math.rutgers.edu/\~{}ajl213/DrZ/}.

The code itself  has documentation describing how to use it. These topics are rich with open problems, which will be mentioned at the end of each of their respective chapters.

\chapter{Limiting Total Height Distributions for Galton Watson Trees}
\label{THS}

The results of this chapter have been accepted for publication, and appear in \cite{Lohr1}.

\section{Background} 

While many natural families of combinatorial random variables, $X_n$, indexed by a positive integer $n$,
(for example, tossing a coin $n$ times and noting the number of heads, or counting the number of occurrences of a specific pattern
in an $n$-permutation) have  different expectations, $\mu_n$, and different standard deviations, $\sigma_n$, and (usually) largely different
asymptotic expressions for these, yet the centralized and scaled versions, $Z_n:=\frac{X_n -\mu_n}{\sigma_n}$, very often,
converge (in distribution) to the standard normal distribution whose probability density function is famously
$\frac{1}{\sqrt{2 \pi}} exp(-\frac{x^2}{2})$, and whose moments are $0,1,0,3,0,5,0,15,0,105, \dots$.
Such sequences of random variables are called {\em asymptotically normal}.
Whenever this is {\bf not} the case, it is a cause for excitement
[Of course, excitement is in the eyes of the beholder].
One celebrated case
(see \cite{Romik} for an engaging and detailed description) is the random variable
`largest increasing subsequence', defined on the set of permutations, where the intriguing {\it Tracy-Widom distribution} shows up.

Other, more recent, examples of {\it abnormal} limiting distributions are described in \cite{ZStatistics}, \cite{EkhadMoment},\cite{EkhadOEIS}, and \cite{EkhadAbnormality}.

In this chapter we consider, from an elementary, explicit, {\it symbolic-computational}, viewpoint, the
random variable `sum of distances to the root', defined over an {\it arbitrary} family of ordered rooted trees
defined by degree restrictions. For analysis of this statistic over uniformly chosen random rooted trees, see \cite{Takacs1} and \cite{Takacs2}.
The asymptotic behavior of this statistic for that uniform distribution of random rooted trees is given in \cite{Takacs3}.

It turns out that the families of trees considered in this paper are
special cases of Galton-Watson trees. These have been studied extensively by continuous probability theorists for many years,
 with a nice, comprehensive introduction given by Janson in \cite{Janson3}. For an analysis of unlabeled Galton-Watson trees, see the work Wagner\cite{Wagner}.
In particular, they are trees that are determined by determining the number of children that every node has by 
independently sampling some fixed distribution with expected value at most 1. Like the trees
considered here (described below), they are also types of Galton-Watson trees. It was shown in a three part sequence of papers by Aldous
(\cite{Aldous0}, \cite{Aldous1}, \cite{Aldous2}) and later by Marckert and Mokkadem (\cite{MaeckertMokkadem}) that all Galton-Watson generated from a 
finite variance distribution of vertex degrees followed the same distribution as 
the area under a Brownian excursion, also a topic well studied in advanced probability theory.
In particular, Janson, in section 14 of \cite{Janson1}, presents a complicated infinite sum which converges to this distribution originally discovered by Darling (1983).
Asymptotic analysis of mean, variance, and higher moments for Galton-Watson trees can be found in \cite{Janson4}.

All these authors used {\it continuous}, advanced, probability theory, that while very powerful, only gives the
limit. We are interested in {\it explicit} expressions for the first few moments themselves, or failing this,
for explicit expressions for the generating functions, for {\it any} family of rooted ordered trees given by
degree restrictions. In particular, we study in detail the case of {\it complete binary trees}, famously counted
by the Catalan numbers.

We proceed in the same vein as in \cite{EkhadOEIS}.
In that article, the random variable `sum of the distances from the root', 
defined on the set of {\it labelled rooted trees} on $n$ vertices,
was considered, and it was shown how to find explicit expressions for any given moment, and the
first $12$ moments were derived, extending the pioneering work of John Riordan and Neil Sloane (\cite{SloanesLove}),
who derived an explicit formula for the expectation. 
The exact and approximate values for the {\it limits}, as $n \rightarrow \infty$, of
$\alpha_3$ (the {\it skewness}),  $\alpha_4$ (the {\it kurtosis}),  and the higher moments
through the ninth turn out to be as follows.

$$
\alpha_3 \, = \,
{\frac{ \left( 6\,\pi -{\frac {75}{4}} \right) \sqrt {3}\sqrt {{\frac {\pi }{10-3\,\pi }}}}{10-3\,\pi }}
\, =\,
 0.7005665293596503\dots \quad , 
$$
$$
\alpha_4 \, = \, 
{\frac{-189\,{\pi }^{2}+315\,\pi +884}{ 7 \, \left( 10-3\,\pi  \right) ^{2}}}
\, = \,
3.560394897132889\dots 
\quad , 
$$
$$
\alpha_5 \, \, = \, 
{\frac { \left( 36\,{\pi }^{2}+{\frac {75}{2}}\,\pi -{\frac {105845}{224}} \right) \sqrt {3}\sqrt {{\frac {\pi }{10-3\,\pi }}}}{ \left( 10-3\,\pi 
 \right) ^{2}}}
\, =\, 
 7.2563753582799571\dots \quad, 
$$
$$
\alpha_6 \, \, = \, 
{\frac{15}{16016}}\,{\frac {-144144\,{\pi }^{3}-720720\,{\pi }^{2}+3013725\,\pi +2120320}{ \left( 10-3\,\pi  \right) ^{3}}}
\, = \, 
 27.685525695770609\dots \quad, 
$$
$$
\alpha_7 \, \, = \, 
{\frac{ \left( 162\,{\pi }^{3}+{\frac {6615}{4}}\,{\pi }^{2}-{\frac {103965}{32}}\,\pi -{\frac {101897475}{9152}} \right) \sqrt {3}\sqrt {{\frac {\pi }{
10-3\,\pi }}}}{ \left( 10-3\,\pi  \right) ^{3}}}
\, = \, 
90.0171829093603301\dots \quad, 
$$
$$
\alpha_8 \, \, = \, 
3\,{\frac {-488864376\,{\pi }^{4}-8147739600\,{\pi }^{3}-455885430\,{\pi }^{2}+86568885375\,\pi +32820007040}{ 2586584\left( 10-3\,\pi 
 \right) ^{4}}} 
$$
$$
\, = \, 358.80904151261251\dots \quad ,
$$
$$
\alpha_9 \, \, = \, 
{\frac{ \left( 648\,{\pi }^{4}+15795\,{\pi }^{3}+{\frac {591867}{16}}\,{\pi }^{2}-{\frac {461286225}{2288}}\,\pi -{\frac {188411947088175}{662165504}}
 \right) \sqrt {3}\sqrt {{\frac {\pi }{10-3\,\pi }}}}{ \left( 10-3\,\pi  \right) ^{4}}}
$$
$$
\, = \, 1460.7011342971821\dots \quad .
$$

[Note that when the moments are not centralized, the expressions are simpler, but we prefer it this way].

\section{Overview}

In this chapter we extend the work of \cite{EkhadOEIS} and treat infinitely many other families of trees.
For any given set of positive integers, $S$, we will have a `sample space' of all ordered rooted trees where
a vertex may have no children (i.e. be a {\it leaf}) or it {\bf must} have a number of children that belongs to $S$.
If $S=\{2\}$ we have the case of {\it complete binary trees}.

For each such family, defined by $S$, we will show how to derive explicit expressions for the generating functions
of the numerators of the straight moments, from which one can easily get many values, and very efficiently find the numerical
values for the moments-about-the-mean and hence the scaled moments. For the special case of complete binary
trees, we will derive explicit expressions for the first nine moments (that may be extended indefinitely),
as well as explicit expressions for the asymptotics of the scaled moments, 
and indeed (as predicted by the above-mentioned authors) they coincide {\it exactly} with those found in \cite{EkhadOEIS}
for the case of labeled rooted trees. This is a specific example of a more general statement about Galton Watson trees given in \cite{Janson4}. 

\section{Rooted Ordered Trees}

Recall that an {\it ordered rooted tree} is an unlabeled graph with the root drawn at the top, and each
vertex has a certain number (possibly zero) of children, drawn from left to right. For any finite set
of positive integers, $S$, let  $\T(S)$ be the set of all rooted labelled trees where each vertex either has
no children, or else has a number of children that belongs to $S$. The set $\T(S)$ has the
following structure (``grammar'')
$$
\T(S) = \{\cdot\} \bigcup_{i \in S}  \, \{\cdot\} \times \T(S)^i \quad.
$$

Fix $S$, Let $f_n$ be number of rooted ordered trees in $\T(S)$ with exactly $n$ vertices.
It follows immediately, by elementary generatingfunctionology, that
the ordinary generating function
$$
f(x) :=\sum_{n=0}^{\infty} f_n  \, x^n \quad ,
$$
(that is the sum of the weights of {\it all} members of $\T(S)$ with the weight $x^{NumberOfVertices}$ assigned to each tree)
satisfies the {\bf algebraic} equation
$$
f(x) = x \left ( 1+ \sum_{i \in S} f(x)^i \right ) \quad .
$$

Given an ordered tree, $t$, define the random variable $H(t)$ to be the sum of the distances to the root of all vertices.
Let $H_n$ be its restriction to the subset of $\T(S)$, let us call it $\T_n(S)$, of members of $\T(S)$ with exactly
$n$ vertices. Our goal in this chapter is to describe a symbolic-computational algorithm that, for {\it any}
finite set $S$ of positive integers, {\it automatically} finds  generating functions that enable the fast
computation of the average, variance, and as many higher moments as desired. We will be particularly interested
in the limit, as $n \rightarrow \infty$, of the centralized-scaled distribution, and we confirm that it is always the same as the one for rooted labelled trees found in \cite{EkhadOEIS} as we would expect by \cite{Janson4}.

Let $P_n(y)$ be the generating polynomial defined over $\T_n(S)$, of the random variable, `sum of distances from the root'.
Define the {\it grand generating function}
$$
F(x,y)=\sum_{n=0}^{\infty} P_n(y) x^n \quad .
$$

Consider a typical tree, $t$, in $\T_n(S)$, and now define the more general {\it weight} by $x^{NumberOfVertices}\, y^{H(t)}=x^n \, y^{H(t)}$.
If $t$ is a singleton, then its weight is simply $x^1 y^0=x$, but if its sub-trees (the trees whose roots are the children
of the original root) are $t_1, t_2, \dots t_i$ (where $i \in S$), then 
$$
H(t)=H(t_1)+ \dots + H(t_i) + n-1 \quad,
$$
since when you make the tree $t$, out of  subtrees $t_1, \dots, t_i$ by placing them from left to right and then
attaching them to the root, each vertex gets its `distance to the root' increased by $1$, so altogether the 
sum of the vertices' heights gets increased
by the total number of vertices in $t_1, \dots, t_i$ (i.e. $n-1$). Hence $F(x,y)$ satisfies the
{\bf functional equation}
$$
F(x,y)=x \cdot  \left ( 1+ \sum_{i \in S} F(xy, y)^i \right ) \quad,
$$
that can be used to generate many terms of the sequence of generating polynomials $\{ P_n(y) \}$.

Note that when $y=1$, $F(x,1)=f(x)$, and we get back the algebraic equation satisfied by $f(x)$.

\section{From Enumeration to Statistics in General}

Suppose that we have a finite set, $A$, on which a certain numerical attribute, called {\it random variable}, $X$,
(using the language of probability and statistics) is defined.

For any non-negative integer $i$, let us define
$$
N_i:=\sum_{a \in A} X(a)^i \quad .
$$
In particular, $N_0(X)$ is the number of elements of $A$.

The expectation of $X$, $E[X]$, denoted by $\mu$  is, of course,
$$
 \mu  \, = \, \frac{N_1}{N_0} \quad .
$$

For $i>1$, the $i$-th straight moment is
$$
E[X^i] \, = \, \frac{N_i}{N_0} \quad .
$$

The $i$-th {\it moment about the mean} is
$$
m_i:=E[(X-\mu)^i]= E[\sum_{r=0}^{i} {{i} \choose {r}} (-1)^r \mu^r X^{i-r}]=
\sum_{r=0}^{i}  (-1)^r {{i} \choose {r}} \mu^r E[X^{i-r}]
$$
$$
=\, \sum_{r=0}^{i}  (-1)^r {{i} \choose {r}} \left ( \frac{N_1}{N_0} \right )^r   \frac{N_{i-r}}{N_0}
$$
$$
= \, \frac{1}{N_0^i} \sum_{r=0}^{i}  (-1)^r {{i} \choose {r}} N_1^r N_0^{i-r-1} N_{i-r}  \quad .
$$

{\bf Finally}, the most interesting quantities, statistically speaking, apart from the mean $\mu$ and variance $m_2$ are
the {\bf scaled-moments}, also known as, {\it alpha coefficients}, defined by
$$
\alpha_i :=\frac{m_i}{m_2^{i/2}} \quad .
$$

\section{Using Generating Functions}

In our case $X$ is $H_n$ (the sum of the vertices' distances to the root, defined over rooted ordered trees in our family,
with $n$ vertices), and we have
$$
N_1(n) = P_n'(1)
$$
$$
N_i(n) = (y\frac{d}{dy})^i P_n(y) \bigl \vert_{y=1} .
$$
It is more convenient to first find the numerators of the factorial moments
$$
F_i(n)=(\frac{d}{dy})^i P_n(y) \vert_{y=1} \quad,
$$
from which $N_i(n)$ can be easily found, using the Stirling numbers of the second kind.

\section{ Automatic Generation of Generating Functions for the (Numerators of the) Factorial Moments}

Let us define
$$
P(X)=1+ \sum_{i \in S} X^i \quad,
$$
then our functional equation for the grand-generating function, $F(x,y)$ can be written
$$
F(x,y)=xP(F(xy,y) )\quad .
$$
If we want to get generating functions for the first $k$ factorial moments of our random variable $H_n$, we need
the first $k$ coefficients of the Taylor expansion, about $y=1$, of $F(x,y)$. Writing $y=1+z$, and
$$
G(x,z)=F(x,1+z) \quad,
$$
we get the functional equation for $G(x,z)$
$$
G(x,z)=x\, P( G(x+xz,z))  \quad .
\eqno(FE)
$$
Let us write the Taylor expansion of $G(x,z)$ around $z=0$ to order $k$
$$
G(x,z)=\sum_{r=0}^{k} g_r(x) \frac{z^r}{r!} + O(z^{k+1}) \quad.
$$
It follows that
$$
G(x+xz,z)=\sum_{r=0}^{k} g_r(x+xz) \frac{z^r}{r!} + O(z^{k+1}) \quad.
$$

We now do the Taylor expansion of $g_r(x+xz)$ around $x$, getting
$$
g_r(x+xz)=g_r(x) \, + \, g'_r(x)(xz) \, +  \, g''_r(x) \frac{(xz)^2}{2!}  \, + \, \dots \, + \, g_r^{(k)}(x) \frac{(xz)^k}{k!} \, + \, O(z^{k+1}) \quad .
$$ 

Plugging all this into $(FE)$, and comparing coefficients of respective terms of $z^r$ for $r$ from $0$ to $k$
we get $k+1$ equations relating $g^{(j)}_r(x)$ to each other. It is easy to see that
one can express $g_r(x)$ in terms of $g_{s}^{(j)}(x)$ with $s<r$  (and $0 \leq j \leq k$) .

Using implicit differentiation, the derivatives of $g_0(x)$, $g_0^{(j)}(x)$ (where $g_0(x)$ is the same as $f(x)$), can be expressed
as rational functions of $x$ and $g_0(x)$. 
As soon as we get an expression for $g_r(x)$ in terms of $x$ and $g_0(x)$, we can
use calculus to get expressions for the derivatives $g_r^{(j)}(x)$ in terms of $x$ and $g_0(x)$. At the end of the day,
we get expressions for each $g_r(x)$ in terms of $x$ and $g_0(x)$ (alias $f(x)$), and since it is easy to find the first
ten thousand (or whatever) Taylor coefficients of $g_0(x)$, we can get the first ten thousand coefficients of
$g_r(x)$, for all $0 \leq r \leq k$, and get the numerical sequences that will enable us to get very good approximations for the alpha coefficients.

The beauty is that this is all done by the computer! Maple knows calculus.

We can do even better. Using the methods described in \cite{FlajoletSedgewick}, one should be able to get, {\it automatically},
asymptotic formulas for the expectation, variance, and as many moments as desired. Using these techniques, it may be possible to obtain expressions for the leading terms of all moments, and so show weak convergence of this distribution to a particular limiting distribution. This should be an interesting future project.

For the special case of complete binary trees, everything can be expressed in terms of Catalan numbers, and
hence the asymptotic is easy. For more general $S$ sets, we do not have the same beatutiful formula that we get for the binary case, but we can still give information about the asymptotics. Our beloved computer, running the Maple package {\tt TREES.txt} (mentioned above),
obtained the results in the next section.

{\bf Computer-Generated Theorems About the Expectation, Variance, and First Nine Moments for the Total Height on Complete Binary Trees on $n$ Leaves}

See the output file

{\tt http://www.math.rutgers.edu/\~{}zeilberg/tokhniot/oTREES3.txt} .

\section{Universality}

The computer output, given in the above webpage, proved that for this case, of complete binary trees, the  limits of the first nine scaled moments coincide
{\it exactly} with those found in \cite{EkhadOEIS}, and given above. 
This confirms, by {\it purely elementary, finitistic methods}, the universality property mentioned above.
We do it for one family at a time, and only for finitely many moments, but on the other hand, we derived
{\it explicit} expressions for the first twelve moments in the case of complete binary trees, and explicit
expressions for the generating functions for the moments for other families.

\chapter{Rational Sloped Paths}
\label{RSP}
\section{Background}

There is a rich study of Dyck paths in combinatorics. Some of the most ubiquitous results are for the case that the slope of the line is 1. In particular that the number of paths from (0,0) to (n,n) is counted by the Catalan numbers. When we change it from 1 to another rational number $a/b$, we enter the realm of appropriately named rational Catalan combinatorics. For notational convenience, we will let $A_{a,b,n}$ denote the number of paths from $(0,0)$ to $(bn,an)$ staying on or below the line $y=a/bx$.

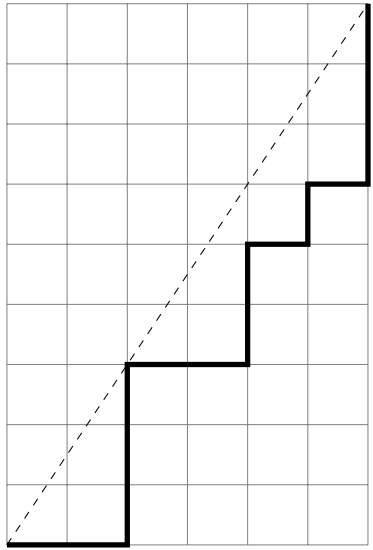
\begin{figure}[h]
\centering
\begin{tikzpicture}[scale=0.8]
(0,0) rectangle +(6,9);
\draw[help lines] (0,0) grid +(6,9);
\draw[dashed] (0,0) -- +(6,9);
\coordinate (prev) at (0,0);
\draw [ line width=2] (0,0)--(2,0)--(2,3)--(4,3)--(4,5)--(5,5)--(5,6)--(6,6)--(6,9);
\end{tikzpicture}
\caption{One of the paths counted by $A_{3,2,3}$}
\label{vertRun}
\end{figure}.

It was shown by Duchon in 2000 that for any slope $\frac{a}{b}$, the number of paths below a line of that slope is asymptotically $\Theta\left(\frac{1}{n} \binom{(a+b)n}{an}\right)$ \cite{Duchon}.  However, it is still unknown what the constant out front is. To show the asymptotics, Duchon showed that the content is somewhere between $\frac{1}{a+b}$ and  $\frac{1}{a}$. This upper bound on the number of paths was known at least as far back as 1950 to Grossman. Grossman also had an interesting result, the first proof of which is given by Bizley in 1954 in a now defunct actuarial journal \cite{Bizley}. It gives that gives an exact formula for every $A_{a,b,n}$. Of course, this precision comes at a cost, The formula is given as a sum over a large set of weighted integer partitions. There is no good way to extract estimates from this formula that we know, but it seems powerful and may be useful for this problem in the future. It would be great to have a simpler explanation of the simpler problem of determining this value up to a $(1+o(1))$ factor

It is clear that $A_{a,b,n} = A_{b,a,n}$ because you could take any valid path, rotate it by a quarter turn and take a reflection. So in this discussion, we will assume that we always have $a>b$.

 For the case $b=1$, there is an exact solution known, using Fuss-Catalan numbers $\frac{1}{1+an} \binom{(1+a)n}{an}$.

\section{Approach}

We would like to try to find the coefficient out front, that is, $\alpha$ so that the number of paths is $\left(1+o(1)\right) \frac{\alpha}{n} \binom{(a+b)n}{an}$. In an effort to do this, we first are tasked with computing many terms of the sequence

\begin{algorithm}
\caption{Dynamic programming approach to generating data}
\begin{algorithmic}[1]
\State{$a[*,0] = 1$ }
\For{$i$ from $1$ to $an$}
	\For{$j$ from 1 while $ bj \le ai$}
		\State{a[i,j] = a[i-1,j] + a[i,j-1]}
	\EndFor
	\State{forget $a[i-1,*]$}
\EndFor
\State{$A_{a,b,n} = a[an,bn]$}
\end{algorithmic}
\end{algorithm}

Through a simple dynamic programming algorithm, we are able to compute the number of paths from $(0,0)$ to $(bn,an)$ for $n$ around 1000. This gives us enough data to try and find a recurrence relation that it satisfies using the maple package available at {\tt http://www.math.rutgers.edu/~zeilberg/tokhniot/FindRec.txt} However, we were only able to successfully find recurrences for the slopes $3/2$ and $5/2$. Armed with these recurrences we are able to blindingly fast crank out many thousands more terms of this sequence. The recurrence in the data file for slope 3/2 is order 4, whereas the one for 5/2 is order 8 and monstrously long. Though we cannot guarantee this is the minimal recurrence, it still gives a massively way faster of counting the paths for these two unknown slopes, and potentially for many more slopes that our computer was not keen enough to find this time.

Then, once we have exact numbers, we do a statistical fit of the data for many values of n against the model $\left(\frac{\alpha}{n} + \frac{\beta}{n^2}+\frac{\gamma}{n^3}+\frac{\delta}{n^4}\right)\binom{(a+b)n}{an}$ to get our estimate of $\alpha$. Adding more error terms did not affect the value of $\alpha$ much. We estimate how close this is to the truth by running it for the first 100 values of n, and the first 200 values of n, and seeing how much our estimate stays the same. We can get quite good estimates of these numbers!

\section{Data and Figures}
\begin{table}
\centering
\caption{Estimates for $\alpha$ part 1}
\begin{tabular}{|c||c|c|c|c|c|c|c|c|c|c|}
\hline
a\textbackslash b&2&3\\
\hline
\hline
2&1&\\
3&0.240706636&1\\
4&0.50000000&0.15972479544\\
5&0.1613399969&0.1372518253\\
6&0.333333333&0.50000000\\
7&0.1216701970&0.1073342967\\
8&0.250000000&0.09683505915\\
\hline
\end{tabular}
\end{table}

\begin{table}
\centering
\caption{Estimates for $\alpha$ part 2}
\begin{tabular}{|c||c|c|c|c|c|c|c|c|}
\hline
a\textbackslash b&4&5&6&7\\
\hline
\hline
4&1&&&\\
5&0.119952918&1&&\\
6&0.240706636&0.09621264003&1 &\\
7&0.09639805178&0.08763172133&0.08039623916&1\\
8&0.5000000&0.08048157890&0.159724795&0.06908631788\\
\hline
\end{tabular}
\end{table}

There are many more slopes for which we have very exact estimates of $\alpha$, and are available on line at

{\tt http://www.math.rutgers.edu/\~{}ajl213/DrZ/RSP.html} in the extra data file.

Though we knew it already with Duchon's result that the value of the coefficient is at most $1/a$, it still seems surprising that the value of the coefficient is not monotone in the value of the slope, that is the actual number $a/b$. We can notice a few simple patterns here, in particular, except in the cases that $a$ and $b$ are not in lowest terms, the value of coefficient decreases as you increase either $a$ or $b$. 

We can investigate this second observation a little further. A similar, but interesting and distinct problem is to try letting something else go to infinity in $A_{a,b,n}$ other than n, as we had before.

Suppose instead that we were to let $a$ go to infinity while $b$ is fixed. There is a nice pattern that appears. In particular, we have the conjecture that $\alpha$ is asymptotically equal to $\frac{gcd(a,b)}{a}$. The $gcd(a,b)$ factor is expected because it makes the expression only depend on the value of $a/b$, as it should.

\begin{figure}[H]
\centering
\caption{$a\alpha$ as a function of a for $b=2$}
\includegraphics[height=.4\textwidth,width=.8\textwidth]{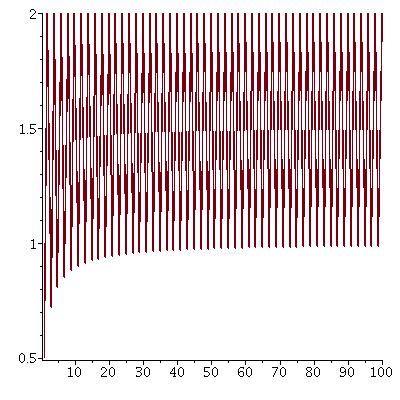}
\end{figure}

\begin{figure}[H]
\centering
\caption{$a \alpha$ as a function of a for $b=8$}
\includegraphics[height=.4\textwidth,width=.8\textwidth]{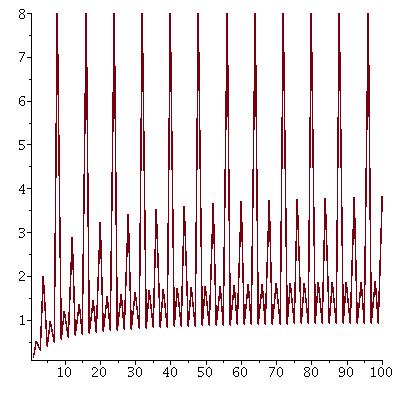}
\end{figure}

\section{Time Above The Line}
A well known result for the case of slope 1 lines is the so called ``Arc-sine law'' which concerns the time that a random walk from $(0,0)$ to $(n,n)$ spends to one side or the other of the line $y=x$. For a discussion of this principle see chapter 6 of \cite{RandProcess}, or, for a more focused discussion of this, see the lecture notes \cite{Reflection}. Follow a random lattice walk from $(0,0)$ to $(n,n)$ and after each step, you make a note of whether you are above or below the line. Then, if you look at the value of the number of times you were above minus the number of times you were below, it is very unlikely that they were roughly the same. More precisely, if you correctly normalize this difference, then asymptotically, the distribution of this difference is going towards 
\[
\frac{1}{\pi\sqrt{k(n-k)}}
.\]
So there are large spikes near all the time above the line and all the time below the line, with a very low trough in the middle corresponding to equal amounts of time above and below the line. It gets its name because its CDF given by arcsine. Even though this may suggest that there is a geometric proof of the fact, none of its proofs are geometric. There is a nice proof that the time above the line statistic follows this distribution using the reflection principle given by Sparre Andersen in \cite{Andersen}.

For our approach, we are able to compute this quantity extremely quickly using a technique involving generating functions. In Figure \ref{3_2_400}, we plot the number of paths from $(0,0)$ to $(400,600)$ with exactly $5k$ time intervals above the line for each value of $k$.
\begin{figure}
\caption{analog of arc sine law for slope 3/2}
\label{3_2_400}
\includegraphics[scale = .5]{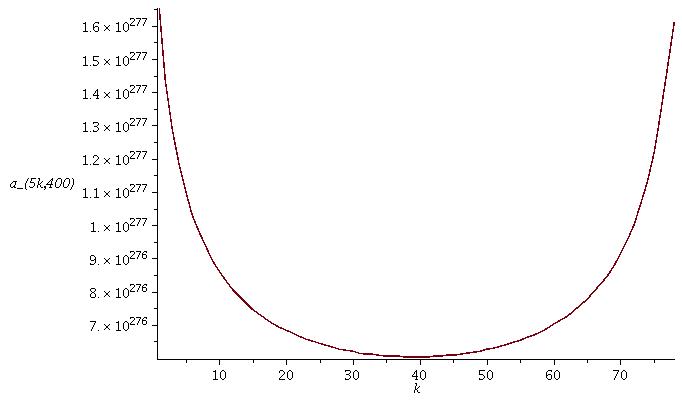}
\end{figure}

Notice that empirically, this distribution is the same as for the well studied case of a slope 1 line. However, there are some important distinctions to mention.

Since we are plotting $5k$, we are only going to be considering one congruence class of possible times spent above the axis. This is also true of the slope 1 result, in that it only considers even times above the axis. However, for the slope 1 case, this is because there are no paths with an odd amount of time above the axis. For the other congruence classes with slope $3/2$, we do get paths with that much time spent above the axis, they are just not as well behaved, usually possessing a significant skew one way or the other. This is also a reason that the reflection principle that is used to prove the arcsine result for slope 1 fails for our slope 3/2. Put another way, it fails because there are multiple different ways you could cross the line, you could cross at a point in your lattice, or in between points in the lattice. This is unlike in the slope 1 case where they all look the same, since you need to cross at a point in the lattice. See figure \ref{3_2_400_offset} for the example where we only consider paths that are length 4 mod 5. Of all the offsets, it is the most extremely skewed in favor of staying below the line.

\begin{figure}
\caption{analog of arc sine law for slope 3/2 with offset}
\label{3_2_400_offset}
\includegraphics[scale = .5]{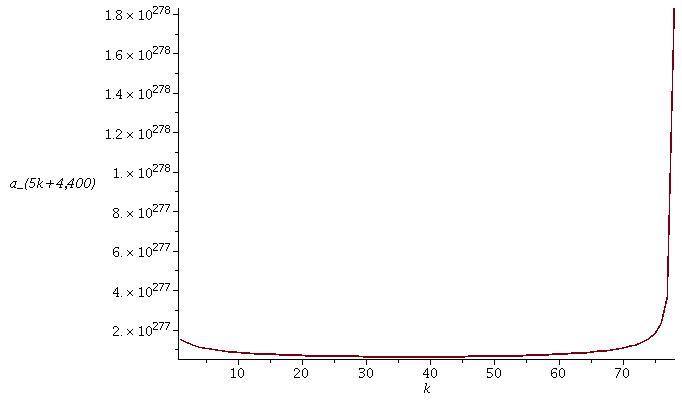}
\end{figure}

\section{Three Dimensional Lattice Walks}

A much less studied area concerns paths in a three dimensional lattice($\Z^3$) that have to stay to stay in a region bounded by planes. It is simple to extend the dynamic programming solution to this situation. However, since there many more lattice points, the runtime goes up from $\Theta(n^2)$ to $\Theta(n^3)$. This keeps us from getting anywhere near as much data as we did in the previous section. With the data we do have, we have the suggestion that something much more interesting than in the 2D case is happening!

There is a treatment of multidimensional lattice walks in a text on combinatorics by Bona \cite{Bona} where they are able to get wonderful exact results using a generating function approach. However, their problem is a bit different than the generalization that we chose in that they require that their planes look like $ d \ge ax+by+cz$ with strictly positive $a$, $b$, and $c$. They also only consider a single plane, and so it seems unlikely that the approach that they mention would apply to our problem. It is, however, a really striking result that is able to completely nail down the problem and so would likely be of interest to those tackling our problem that we present here.

The way that we set up the three dimensional problems, is that we take the number of paths with steps in $\{\langle1,0,0 \rangle,\langle0,1,0 \rangle, \langle0,0,1 \rangle\}$. Instead of the 2D problem of requiring $ax<by$, we define and instance of the counting problem to be indexed by three numbers, $a,b,c$, and require of our paths that they satisfy $ax\le by\le cz$ that end at $x=bc,y=ac,z=ab$. If we have $a=b=c=1$, this has the precise formula of the 3D Catalan numbers (A005789 in \cite{OEIS}) $ \frac{2}{(n+1)^2(n+2)} \binom{3n}{n,n,n}$. However, there is a lot left to understand in this problem, and some things that are distinctly different than the 2D case. In particular, for 2D, it was always  $\Theta\left(\frac{1}{n} \binom{(a+b)n}{an}\right)$. That is, the slope of the line did not affect the fact that you always had a $\Theta(\frac{1}{n})$ fraction of all paths. For the already known $a=b=c=1$, it is a $\Theta(\frac{1}{n^3})$ fraction of all paths, however this appears to change for different choices of $a,b,c$. We have some data on the value of this coefficient in tables \ref{tab:RSP3D1} and \ref{tab:RSP3D2}.

Our problem is in a way very related to the multi-candidate ballot problem presented in \cite{ZGessel} which is phrased in terms of Weyl chambers. For their technique, they are instead requiring that $a=b=c=1$, so, looking at higher dimensional generalizations (of what is already known in this three dimensional problem).

\begin{table}
\centering
\caption{\label{tab:RSP3D1}a=1}
\begin{tabular}{|c||c|c|c|c|c|c|c|}
\hline
b\textbackslash c&1&2&3&4&5&6&7\\
\hline
\hline
1&3.0&2.7&2.6&2.5&2.5&2.4&2.4\\
2&3.7&3.3&3.0&2.9&2.8&2.7&2.6\\
3&4.3&3.7&3.4&3.2&3.1&3.0&2.9\\
4&4.8&4.1&3.8&3.5&3.4&3.2&3.1\\
5&5.2&4.5&4.1&3.8&3.6&3.4&3.3\\
6&5.7&4.8&4.4&4.1&3.8&3.6&3.5\\
7&6.0&5.2&4.6&4.3&4.0&3.8&3.7\\
\hline
\end{tabular}
\end{table}

\begin{table}
\centering
\caption{\label{tab:RSP3D2}a=2}
\begin{tabular}{|c||c|c|c|c|c|c|c|}
\hline
b\textbackslash c&1&2&3&4&5&6&7\\
\hline
\hline
1&2.7&2.5&2.5&2.4&2.4&2.3&2.3\\
2&3.3&3.0&2.8&2.7&2.6&2.6&2.5\\
3&3.7&3.4&3.2&3.0&2.9&2.8&2.8\\
4&4.1&3.7&3.5&3.3&3.1&3.0&3.0\\
5&4.5&4.0&3.7&3.5&3.4&3.2&3.2\\
6&4.8&4.3&4.0&3.7&3.6&3.4&3.3\\
7&5.2&4.6&4.2&4.0&3.8&3.6&3.5\\
\hline
\end{tabular}
\end{table}

By fitting this data (with many more digits of each number than we have listed in this table), we are able to get conjecturally that

\[
\alpha = 2 + \frac{1}{\sqrt{c}}.
\]

This is really bizarre that not only are we having fractional powers of $n$ of the path, but irrational powers of $n$ showing up. This would also suggest that applications of the techniques of chapter \ref{Celine} would not be useful here as the Birkhoff-Tirijinski method \cite{WZ} for finding dominant asymptotics is unable to yield irrational powers of $n$ in its dominant asymptotics. 

For analyzing these paths, it is often helpful to view them as words instead. They are words in the three basic steps that are allowed to be taken, either a unit step in the positive $x$ direction, in the positive $y$ direction, or in the positive $z$ direction, denoted by the letters $x$, $y$, and $z$ respectively. So, for example, if we have $a=2$, $b=1$, $c=3$ and $n=1$ we could have $zyyxzyyyxy$ from among the 54 possible paths with that choice of $a$, $b$, and $c$. 

From these tables, one simple lower bound that one might notice is that none of these coefficients is greater than two. Before we get to proving this fact, we introduce a definition that will be useful. We say that two words $w_1$ and $w_2$ are {\em y-conjugated} if we can factor them in such a way that there is a $v_1$ and $v_2$, such that $w_1 = v_1 y v_2$ and $w_2 = v_2 y v_1$. Another way of viewing this is that we can get the the y-conjugates of a word $w$ by attaching a $y$ to the end, performing some number of cyclic shifts of this word until a $y$ is once again at the end, and finally peeling of the trailing $y$. This second interpretation makes it much clearer that this is an equivalence relation. Motivated by this standard definition of conjugation about a letter, we define a new kind of conjugation about a pair of letters. 

Define $xz$-conjugation by first picking two particular occurrences of an $x$ and a $y$ and then split up the word $w$ into $w = v_1 x v_2 y v_3$. Define the $xy$ conjugate to be $w' = v_3 y v_2 x v_1$. If our occurrence of $y$ was first, then that is not an issue, define the conjugate of $ v_1 y v_2 x v_3$ to be $v_3 x v_2 y v_1$. We show that this is also an equivalence relation. It is clearly symmetric since the operation is its own inverse, picking the same occurrences of $x$ and $y$.

Further analyzing these sequences seems like a fruitful place to make progress, as very little is known about their growth.

\chapter{Generalization of Sister Celine's Method}
\label{Celine}

The results of this chapter have been accepted for publication, and appear in \cite{Lohr2}.

\section{Background}

One of the earliest steps in automatically proving identities dates back to Sister Mary Celine Fasenmyer's 1945 Ph.D. thesis \cite{Fasenmyer1}. She gave a technique for computing sums of proper hypergeometric terms, also see \cite{Fasenmyer2}. Her technique concerns sequences of the form  $x_n = \sum_k H(n,k)$, where the sum is over all $k$ so that $H(n,k)$ is non-zero. Because it is summing over all of these $k$, the problems that it can be applied to only make sense if for each $n$ there are only finitely many values of $k$ that cause $H(n,k)$ to be non-zero. Many expressions constructed from binomial coefficients fit these requirements. It also requires that $H(n,k)$ is proper hypergeometric, meaning that it can be written in the form:
\[
F(n,k) = P(n,k) \frac{\prod_{i=0}^{U} (a_in+b_ik+c_i)!}{\prod_{i=0}^{V} (u_in+v_ik+w_i)!} x^k
\].
For some finite $U,V$,  $i$, $a_i,b_i,c_i,u_i,v_i,w_i \in \Z$, $x$ an indeterminate, and $P$ a polynomial. A simpler way to phrase this is that $H$ needs to be a polynomial times some rational expression of factorials. One implication of this is that both $\frac{H(n,k+1)}{H(n,k)}$ and $\frac{H(n+1,k)}{H(n,k)}$ are rational functions in $n$ and $k$. In order to determine if there is an order $I$ recurrence for the sequence, her technique picks some $J$ and considers 

\[
0 = \sum_{i=0}^I \sum_{j=0}^J y_{i,j}(n) H(n+i,k+j),
\]
where $y_{i,j}(n)$ is an as yet unknown rational function of $n$. If the value picked for $J$ was not large enough then this procedure will fail, and a higher value for $J$ would be considered. Then, by $H$ being hypergeometric, it is able to reduce all of the $H(n+i,k+j) = G_{i,j}(n,k)H(n,k)$ where $G_{i,j}$ is some rational function of $n$ and $k$. From there, divide everything through by $H(n,k)$. Now, we have something of the form 

\[
0 = \sum_{i=0}^I \sum_{j=0}^J G_{i,j} (n,k) y_{i,j}(n).
\]

Combining denominators on the right hand side, and multiplying through by the common denominator, we get that the right hand side becomes a polynomial in $n$,$k$, and $\{y_{i,j}(n)\}$. Collect terms by what power of $k$ appears, and then solve for what the $\{y_{i,j}(n)\}$ have to be in order to make all of the coefficients of powers of $k$ equal to zero. We may get unlucky and have no solution. Then, we would need to try a larger $I$ and $J$ to begin with. If however, we find a solution, we plug that into where we first introduced $y_{i,j}(n)$. Since these have no $k$'s in them, and $x_n$ is obtained by summing over all values of $k$ that make the summand nonzero, we have

\[
0 = \sum_{i=0}^I \sum_{j=0}^J y_{i,j}(n) H(n+i,k+j) = \sum_{i=0}^I \left( \sum_{j=0}^J y_{i,j}(n)\right) x_{n+i}.
\]

Which we may write in shift operator notation as 
\[
0 = \left(\sum_i^I \left( \sum_{j=0}^J y_{i,j}(n)\right) N^i\right)x_n.
\]

At this point we say that we are done. First, having a recurrence allows us to compute the sequence out to very large values very quickly, storing only a constant number of terms. Also, once we have a rational recurrence like this for $x_n$ then we can extract as good asymptotics as desired like using techniques by Birkhoff-Trjizinski which has been nicely summarized in \cite{WZ}. Sometimes, but not always, we are able to recover a really nice formula such as a product of rational functions, factorials, and binomial coefficients. Some of the recurrences found by our procedure are very complicated, so there is little hope to always be able to recover a formula.

For a more complete explanation of Sister Celine's method, look at chapter 4 of \cite{AeqB}. There are some generalizations of Sister Celine's method given in \cite{ZCeline}, in particular to certain classes of multiple summations and to a continuous analog.

Some of our applications of the expanded method presented in this paper relate to binomial transforms of functions. There are nice treatments of binomial transforms of Fibonacci like sequences given in \cite{Spivey}.

\section{Overview}

We take the described technique of Sister Celine and extend it to allow many more kinds of summands. In particular, the sequence can be of the form $x_n =\sum_k^n a_k^d H(k,n)$ where d is any number,  H is hypergeometric, and $a_k$ is some sequence defined by a rational recurrence relation. Since so many sequences can be so described by rational recurrence relations, this is a significant extension in scope. 

It works very similarly to Sister Celine's method, in that we will consider ratios of successive terms. That is,  to find a recurrence with order at most $I$, start with

\[
\sum_{i=0}^I \sum_{j=0}^{J} \frac{H(n+i,k+j)}{H(n,k)} a_{j+k}^d y_{i,j}(n).
\]

Let $D$ be the order of the recurrence describing $\{a_k\}$. Then, we use that relation to rewrite all of the $\{a_{k+j}\}_{j=D}^J$ in terms of $\{a_{k+j}\}_{j=0}^{D-1}$. That is, by repeatedly applying the relation, we can write each $a_{j+k}$ as a linear combination:

\[
a_{j+k}  = \sum_{m=0}^{D-1} c_{k,j,m} a_{k+m},
\]
where for the $j<D$, we just let 
\[
c_{k,j,m} = \begin {cases} 1 & j=m\\ 0 & j \neq m \end{cases}.
\]

Then, since we have an expression with $D$ terms to the $d$, we can expand that out to get at most $Dd$ terms. Then, unlike in Sister Celine's method, where we have a polynomial in $k$, we now have a polynomial in $\{k,a_k,a_{k+1},\ldots a_{k+D-1}\}$. But, once we have collected the coefficients of each of the combinations of those variables, we set all of them equal to zero, and then try to solve for the $y_{i,j}(n)$. Since we are able to keep the number of variables that it is a polynomial in bounded by $D$, we are able to be sure that the number of equations won't exceed $\binom{D+|J|}{|J|}$, while the number of rational functions that we are allowing us to pick is $|I||J|$. So, when we are looking for a recurrence, we can try and allow us larger and larger orders, until we have the freedom needed to find all of the rational functions needed to make the recurrence true. As in Sister Celine's method, we are not guaranteed that we can find such a solution for our particular choice of $I$ and $J$. We are guaranteed by WZ theory \cite{AeqB} that for a large enough choice of $I$ and $J$, it gives us a recurrence relation that looks like

\[
 0 = \left( \sum_{i=0}^I \left(\sum_{j=0}^J y_{i,j}(n)\right) N^i \right) x_n.
\]

Our whole technique is implemented in a Maple package whose address is given at the beginning of this paper. The usefulness of our technique comes from being easily carried out by a computer, since the systems of equations involved quickly get too large for a person. We invite the reader to use this package the next time that the come across a type of summation problem that they want to analyze.

\section{Application to Enumerating Chess King Walks}

Suppose that there is a king wandering around on an infinite $d$-dimensional chess board. We want to know how many of the $(3^d-1)^n$ walks of length $n$ that the king could take would end up bringing him back to where he started. Given a polynomial $p$, we use the notation $Ct(p)$ to denote the constant term of $p$. Then, by using the powers of $z_i$ to keep track of our total displacement in the $i$ dimension, we have:
\begin{align*}
x_n &=Ct\left(\left(\left(\prod_{i=1}^d z_i +z_i^{-1} +1\right) -1\right)^n\right)\\
&=Ct\left(\sum_{k=0}^n\left(\prod_{i=1}^d z_i +z_i^{-1} +1\right)^k \binom{n}{k} (-1)^{n-k}\right)\\
&=\sum_{k=0}^n Ct\left(\left(\prod_{i=1}^d z_i +z_i^{-1} +1\right)^k\right) \binom{n}{k} (-1)^{n-k}\\
&=\sum_{k=0}^n Ct\left(\left(z +z^{-1} +1\right)^k\right)^d \binom{n}{k} (-1)^{n-k}.
\end{align*}
 
Luckily for us, $Ct\left(\left(z +z^{-1} +1\right)^k\right)$ is already well understood. It is the sequence of central trinomial coefficients (A002426 \cite{OEIS}). Also luckily, it is known that this sequence satisfies the recurrence
\[
 0 = \left(N^2 - \frac{2n-1}{n}N - \frac{3n-3}{n}\right)x_n.
\]

So, we are in exactly the set up of our technique. In which case, if we let $a_k = Ct\left(\left(z +z^{-1} +1\right)^k\right)$, we can describe the number of $d$ dimensional king walks which end at the origin after taking $n$ steps by 

\[\sum_{k=0}^n a_k^d \binom{n}{k}(-1)^{n-k}.\]

Once the counting problem has been rewritten as this sum, it clearly falls into the scope of our technique. Using it we are able to find rational recurrences (effectively solve) for all dimensions up to 4.
For a two dimensional king walking around, if we let
\begin{align*}
g(n,N) =& (3n^3+40n^2+175n+250)N^3  \\
 &+(9n^3+138n^2+703n+1190)N^2\\
&+(108n^3+1548n^2+7364n+11632)N\\
&+96n^3+1280n^2+5632n+8192
\end{align*}
then
\[
0 = g(n,N)x_n.
\]
Although this recurrence already looks a little ugly, at least it is short, which is more than can be said of those expressions describing higher dimensions. But they are included in an appendix. Also important is that they were found by a computer.

Something more insightful than looking at the walls of text you see when looking at the recurrences that exactly describe these sequences is looking at their their asymptotics:

For the two dimensional king, the number of paths of length n is:
\[
c_2 \frac{8^n}{n}\left( 1 - \frac{4}{9n} + \frac{1}{18n^2} + O\left(\frac{1}{n^3}\right)\right),
\]
For three dimensions the number is:
\[
c_3\frac{26^n}{n^\frac{3}{2}}\left( 1 - \frac{11}{18n} + \frac{683}{5832n^2} + O\left(\frac{1}{n^3}\right)\right).
\]
and for four dimensions the number is:
\[
c_4 \frac{80^n}{n^2} \left(1 - \frac{25}{9n} + \frac{36439}{6561n^2}+O\left(\frac{1}{n^3}\right)\right).
\]

The dominant asymptotics are somewhat unsurprising. The exponential part is all possible paths. The dominant power of $n$ is $\left(\frac{1}{\sqrt{n}}\right)^d$. It is well known that the central binomial coefficient is asymptotically $\frac{2^n}{\sqrt{n}}$, and we are doing something somewhat like that in $d$ dimensions. The value of $c_2$ is approximately equal to $\frac{2}{3\pi}$. This value for $c_2$ can be proven in a rigorous way using classical analysis. For $c_3$ and $c_4$, we are not so lucky, instead, all we can say from non-rigorous observation is that $c_3 \approx .110225343716$ and $ c_4 \approx .068412392872$. There might be some way using a more traditional approach that would get us the true value of these constants.

The $d=2$ case was first worked out by a computer using a different approach. For more information on this, see \cite{EkhadKing}. Their approach expresses the quantity as a double contour integral and applies their own automated techniques to evaluate it. For information on the techniques, see \cite{ApagoduZ}. A completely human produced analysis of this sequence proves more illusive.

\section{Application to Other Sequences}

Our technique also allows for computing binomial transforms of interesting sequences. An example of this is if we were to let $F_k$ denote the $k$-th Fibonacci number and consider the sequence

\[
x_n = \sum_{k=0}^n F_k \binom{n}{k},
\]
we immediately receive that the recurrence that defines $x_n$ is $0=(-N^2 + 3N - 1)x_n$. This recurrence is identical to the recurrence given for (A001906) which is the sequence describing the sum. Though this is already a known fact, if we just bump the power up on $F_k$ to $F_k^3$, we still get a rather nice recurrence relation for the sum, in particular it is described by $0=(-N^4 + 7N^3- 9N^2-2N+4)x_n$. This integer sequence is recently added as number(A298591 \cite{OEIS}). All powers of Fibonacci follow this nice pattern that a linear recurrence where the terms do not depend on $n$ suffices, instead of in general for our technique, where the recurrence may need rational functions of $n$ showing up to describe the next term. We would know for free that it must be P-recursive, but it is an interesting conjecture that it also need be C-finite. These C-finite sequences are discussed in greater detail in \cite{ZCFinite}. The techniques given in that paper can also be applied to some of the problems considered here.

Also of interest, suppose that $a_k$ is be the m-Fibonacci sequence, defined as $a_0=0$, $a_1=1$, and $a_{k+2} = ma_{k+1} + a_k$ for $k\ge 0$. Then, since the program was implemented in a symbolic way, doing this type of problem is no more work than the ones we've already considered. 
\[
x_n = \sum_{k=0}^n a_k \binom{n}{k},
\]
we have 
\[
x_{n+2} = (2+m)x_{n+1} - x_{n}.
\]

This is also known, but is the main theorem of a twelve page paper by Falcon and Plaza \cite{FalconPlaza}.

Since so many sequences of interest in combinatorics already have recurrences found for them, this makes our technique even more powerful. All we need is a recurrence for the non-hypergeometric factors living inside the summand, and then we are set for getting an automatic answer. For example, suppose that you took the Motzkin numbers $M_n$ (A001006) which appears in many guises, but one way of defining it is as the number of (classical) Dyck words on $U$ and $R$ that avoid $UUU$. Among the many forumlae that are known for it is

\[
M_n = \sum_{k} \frac{n!}{k! (k+1)! (2n-k)!}  .
\]

We can then apply the original Celine's algorithm to obtain that $M_n$ satisfies the recurrence:

\[
(n+2) M_{n+2} = (2n+1)M_{n+1} +(3n-3)M_n .
\]

Then, once there is a recurrence to work with, there are so many possible summation that open up to us. For example, if we wanted to describe the sum

\[
x_n = \sum_k M_k^2 \binom{n}{k} ..
\]

After the computer takes several minutes to think, it is able to show that this sequence satisfies the recurrence

\begin{align*}
0=&(-120n^4-1240n^3-4440n^2-6440n-3120)x_n\\
    &+(132n^4+1520n^3+6320n^2+11300n+7368)x_{n+1}\\
    &+(18n^4+318n^3+2044n^2+5660n+5712)x_{n+2}\\
    &+(-33n^4-578n^3-3761n^2-10760n-11400)x_{n+3}\\
    &+(3n^4+61n^3+458n^2+1500n+1800)x_{n+4}.
\end{align*}

Judging just by how complex the expression is, it seems unlikely to be easily discovered by classical (non-automated) approaches. And even though its messiness is a barrier to gaining much human understanding of $x_n$, as already mentioned, there are automated tools of Birkhoff and Tirijinski \cite{WZ} to take recurrences and then distill out facts that are of human interest (such as asymptotics).

\section{Application to Multiple Summations}

Another promising application of our technique is to evaluating multiple sums over hypergeometric terms. A toy example of this would be computing

\[
\sum_{i=0}^n \sum_{k=0}^i \binom{i}{k} \binom{n}{i}.
\]

To find a recurrence for this sequence, pick out any of the factors which contain $k$, and run some automated process to evaluate single summation such as the Zeilberger Algorithm \cite{AeqB}. Often, this sum will not have a nice formula, so we are left with a possibly high order recurrence describing it. However, that is precisely what the techniques here are made to handle, so we can feed this partial evaluation into the procedure. Given enough computing this allows any number of summation signs to be dealt with. For each summation, we have the usual requirements of the original Sister Celine's method, namely that for each summation, the boundaries extend as far as the terms can be without becoming zero. In this particular case, evaluating the inner sum yields $0=(N - 2)x_n $, and substituting in that recurrence, we get that the whole sum satisfies $0=(N -3)z_n$. Which is to say, the sum evaluates to $3^n$. Though this has a nice combinatorial proof counting the number of assignments from $\{1,\ldots,n\}$ to $\{1,2,3\}$ by first picking the $k$ elements that map to either $1$ or $2$, and then, from those $k$ elements, picking the $i$ elements that map to $2$. That requires a moment of thought where such a simple recurrence for the computer only requires less than a second of `thought'. Alternatively, consider the harder problem, where we would want to compute

\[
\sum_{i=0}^n \sum_{k=0}^n \binom{i-k}{k}^2 \binom{n}{i}.
\]

It \textbf{may} be possible to evaluate this in a more human way, but for the computer it can easily determine that the solution is described by the recurrence

\[
0 = (-(n+9)N^5+(7n+54)N^4-(17n+103)N^3+(21n+97)N^2-(15n+50)N+5n+5) x_n.
\]
A Maple package for multiple summations has already been described in \cite{ApagoduZ} and is available at:

{\tt http://sites.math.rutgers.edu/\~{}zeilberg/mamarim/mamarimhtml/multiZ.html}

However our package takes roughly the same time on the simple first example given, and is faster than their package on the second example. Their package, however, gives a `better' analysis of the summation, in that it does indefinite summation, and does not require that on the bounds of summation, the summand is zero. That is, theirs generalizes Zeilberger's algorithm, instead of Sister Celine's method.

\section{Example Usage of this Maple Package}

Hopefully by this point, the usefulness of our package has been made clear. Though there is more detailed documentation in the maple package itself, here is a brief description of how they are used. The first step is to figure out the recurrence that is satisfied by $a_k$, called rec1. Then, call {\texttt findrec(I,J,timeout,rec1,F,d,n,N)} where both rec1 and the output are in shift operator notation, with N denoting the shift operator. This call will attempt to find the recurrence for the sum:

\[
x_n = \sum_{k=0}^n a_k^d H(n,k),
\]
where the recurrence is of order at most $I$, and degree at most $J$. {\texttt Timeout} is the most time (in seconds) to wait on a particular attempt, if it exceeds that time, the procedure exits.

There is a complete description of how to format these problems to make use of the maple package are included in its documentation. The package can be found at {\tt http://sites.math.rutgers.edu/~{}ajl213/DrZ/Celine/RecSum.txt}. Just read the package in and type ``Help()'' to see the documentation.

\section{Future Directions}

The techniques here could easily be extended to allowing arbitrary products of factors, each of which satisfy a known recurrence. This would require only a little bit of modification of these techniques, but simply has not yet been implemented.

\chapter{A Practical Variation on Gosper's Algorithm}
\label{Gosper}

\section{Background}

Gosper's algorithm \cite{Gosper} gives a technique for ``solving" summations of hypergeometric sequences, that is, given a hypergeometric $F(n,k)$, and a sum of the form

\[
x_N = \sum_{n=0}^{N-1} F(n),`
\]

it is able to find a formula for $x_N$ as a constant plus a hypergeometric term, if one exists. For a detailed description of how (and why) Gosper's algorithm works, we refer the reader to chapter five of the book \underline{A=B} \cite{AeqB}.

However, many sequences of interest are not hypergeometric, so there is definitely room for further work along this same goal of computing indefinite summations.

\section{Overview}

Instead of starting with sequences more complicated than hypergeometric, we will start with applying our approach to sequences that are less complicated, and move up from there. Consider rational functions. Suppose that we have some summation 
\[
x_N = \sum_{n=0}^{N-1} \frac{P(n)}{Q(n)}
\]

and we want to say what rational function this is equal to. We know that one exists because we could always of expanded out the summand by rational functions that telescope, notice some cancellation, and then add together what is left over to get a rational expression. In Gosper's algorithm, we are able to tell precisely when some summation is equal to a constant plus a hypergeometric term. However, to pay the price for expanding the possible values that it could sum to, we will have to give up this guarantee of knowing for certain that if our algorithm fails to figure out such a summation, then there is none. Instead, it could be that it only failed for the considered degree of the recurrence, and it may instead find a recurrence that the summation satisfies by simply increasing the order of the recurrence or the degree of the rational functions that are  used as coefficients in the recurrence.

First, we try to identify an expression for the limit $L$ of the summation. For our considered summations of a rational summand, we know that the limit will always be rational, and so, by looking at partial fraction decompositions of larger and larger partial summations. Looking at the decomposition, one entry blows up while everything before that entry stays the same, giving us a very good guess that the limit is the part of the decomposition before the entry blowing up. Then, once we know the value of the limit of the summation, we construct the sequence

\[
y_N = \sum_{N}^{\infty} \frac{P(n)}{Q(n)},
\]
that is,

\[
y_N = L - x_N
\]

Then, we generate may terms of this sequence and try to identify the result as a hypergeometric sequence. This step in particular allows for a lot of freedom. If instead of hypergeometric, we were looking for descriptions of this sequence as a higher order recurrence, we could just use that information to change the set of linear equations that need to be solved. The other class of functions that we are considering are called multibasic sequences. They are ones where the ratio of successive terms is some fraction of multinomials in different expressions depending on $n$ instead of just polynomials in $n$. 

So, for example, to evaluate

\[
\sum_{n=0}^{N}-\frac{1}{4}\frac{(2^{2n+2}n^2+2n2^{3n}-2^{4n}+3n2^{2n}+n2^{n+1}+4n^2-2^{3n}-72^{2n}-42^n-12)}{(n!(2^{2n-2}+1)(2^{2n}+1))}
\]

It is found to be

\[
e + \frac{(2^N+N+3+2^{2N})}{((2^{2N}+1)N!)}
\]

That is, it is Euler's constant plus a multibasic expression in $N$ and $2^N$ of max degree 2.

The drawback of this technique is that of identifying $L$. It works fine if the function is very quickly converging, but it starts to perform poorly as the sum converges more slowly, and is completely worthless when the sum does not have a finite limit. Instead of solving the set of linear equations that we set up, also allow this limit to be an unknown, and solve a resulting non-linear set of equations. Of course this is much slower, and so this techniques is only best used for summations that rapidly converge.

Another broad class of summands to which this can apply are not just when we extend the rato of successive terms to some fixed rational expression of atoms that are hypergeometric, 

\[
\frac{P(a_1,a_2,\ldots,a_k)}{P(a_1,a_2,\ldots,a_k)}
\]

where $a_i$ is something such as $2^n$ or $n!$. Instead, what if we were to allow $a_i$ to be more free in how it depends on $a_i$. For example imagine that we had that successive terms had the ratio $\frac{F_{n+1}}{F_{n}}$ where $F_n$ for this little example represents the $n$th Fibonacci number. More generally, the structure about this that is helpful to us is that our new atoms that we are exploiting is that they are $C-finite$. This allows us to reduce occurrences of them for larger $n$ to a number of starting terms equal to the order minus one, possibly times rational expressions in $n$ that come from the recurrence that they satisfy. 

\section{Examples and Motivations}

Many of the examples here are constructed in order to show the usefulness of this procedure, instead of arising organically. Any, In general, non-cooked up examples would also yield results. The only problem with that is that the degrees that would be required for for the solution could be very very high. There is not a way to know before hand how high of degrees would need to be considered, so our procedure needs to look for higher and higher degree solutions.

The example that initiated this whole investigation was the monthly problem that asks the reader to compute the value of
\[
\sum_{n=0}^{\infty} \frac{1}{(n^4+n^2+1)n!}.
\]

The immediate impulse of expanding using partial factions does not work, since we cannot factor the bottom in such a way to get the terms to telescope and leave us with our answer. The other go-to resource for evaluating sums, Maple, is also helpless when faced with this problem. All it does is parrot back the sum. Mathematica managed to compute the value of the infinite sum, but gave an expression far more complicated than necessary to express the value of the partial sums. However, the trick for finding a simple expression is to notice that if we took our sum and subtracted off 

\[
\sum_{n=0}^{\infty} \frac{1}{2} \frac{1}{n!},
\]
Then, the sum becomes
\begin{align*}
\sum_{n=0}^{\infty}\left(\frac{1}{n^4+n^2+1} - \frac{1}{2}\right)\frac{1}{n!}&=\sum_{n=0}^{\infty}\left(\frac{2}{2n^4+2n^2+2} - \frac{n^4+n^2+1}{2n^4+2n^2+2}\right)\frac{1}{n!}\\
&=\sum_{n=0}^{\infty}\frac{1-n^2-n^4}{2n^4+2n^2+2}\frac{1}{n!}\\
&=\sum_{n=0}^{\infty}\frac{1}{2}\left(\frac{-n}{n^2-n+1} + \frac{1+n}{n^2+n+1}\right)\frac{1}{n!}.\\
\end{align*}

Since these two terms telescope, that is, plugging in $n:=n+1$ into the first gets you the second, we have that the value of the partial sum is given just by

\[
\frac{1+N}{N^2+N+1}\frac{1}{N!}.
\]

So, once we have subtracted off $\frac{1}{2}e$ in a fancy way, the resulting sum clearly goes to zero.

When we were drawing inspiration from this problem, as mentioned earlier, we gave up hope that the algebra would always work out so nicely as this toy problem. The part that we kept from this motivation was that it required some sort of guessing at the value of the limit. Once a value for the limit is guessed, we find a summation that goes to that value and matches factors found in the input summation. So, even though there is initially guessing at the value of the limit, once the guess is made, it is possible to go back and prove that that guess was correct. 

For the case of this problem, the system is able to correctly simplify the hypergeometric representation that it finds of the partial sum to this equally simple expression. This is far better than Mathematica's result for the partial sum which is littered with complex hypergeometric expressions and takes about half a page to display. 

Some more complicated sums that you might encounter that this same approach can consider are ones such as:

\[
x_n = \sum_n \frac{n^2+1}{(2n)!}
\]

Then, without any changes, the same experimental approach that solved for us the last problem gets us that the sum appears to be converging to:

\[
5/4 \cosh(1) + 1/4 sinh(1)
\]

Then, subtracting off the taylor series for that, it is able to identify a hypergeoemtric ratio that seems to be satisfied as

\[
\frac{x_{n+1}}{x_n} = \frac{n+1}{ 	2n^2(n+1)}
\]

Once it has figured that out, it is then able to find a closed form expression for this indefinite summation

\[
x_n = \frac{1}{2} \frac{n+1}{(2n+1)!}
\]

once it has found this (with guessing along the way) it is able to easily verify that this expression does describe the value of the sum, since to check, all you need to do is subract $x_n$ from $x_{n-1}$ and verify that you get the summand

As a stress test, we may even consider some sum as messy as:

\[
x_n = \sum_k\frac{36n^9-126n^8+489n^7-1343n^6+1633n^5-784n^4-582n^3-310n^2-735n+237}{(n^3-2n^2+11n-3)(n^3+n^2+10n+7)(2n)!}
\]

Then, the procedure is still able to make progress towards an answer. It makes a guess for the value of the sum of

\[
11 cosh(1)
\]

and is then able to guess a value for the hypergeometric ratio.

\[
\frac{x_{n+1}}{x_n} = \frac{ (9 n^4  + 3 n^2  + 2) (n^3  - 2 n^2  + 11 n - 3)}{2 n (2 n - 1) (n^3  + n^2  + 10 n + 7) (9 n^4  - 36 n^3  + 57 n^2  - 42 n + 14)}
\]

and finally outputs that a formula for $x_n$ is 

\[
x_n = \frac{ 9 n^4  + 3 n^2  + 2}{ (n^3  + n^2  + 10 n + 7) (2 n)!}
\]

which can then be rigorously, automatically checked.

Of course these examples are somewhat cherry picked as you would not expect a typical random summation that you would want to compute to even have a Hypergeometric representation. We saw this fact in the previous chapter where we were instead hunting for recurrences to describe the summation. Even though many sums would not admit a hypergeometric solution, for those that do, it is a much better description of the indefinite summation. 

Another major limitation of this technique is that it requires the summand to be very quickly decreasing to zero. This is because the guessing procedure that is used in order to figure out how much to shift the summation by relies on being able to compute out many, many digits of precision for the sum. It gets an estimate of how many digits it can trust by computing the partial sums out to different lengths, and then seeing up though how many digits the two approximations agree.

\section{Using this Maple Package}

Hopefully by this point, the potential usefulness of these procedures has been made clear. Though there is more detailed documentation in the maple package itself, here is a breif description of how they are used. To try and figure out a hypergeometric expression for the indefinite sum, allowing degree at most $d$, and starting at $n=0$, call $PMG(expression, n,d,0)$. The procedure will then spit out its guess for the summation, followed by a line that contains a constant followed by the ratio of consecutive terms satisfied by the sequence. Lastly, it will attempt to return a closed form expression for the summation, which may often not exist.

For The version that is multibasic, instead call $PMGMB$, with the same format for arguments. The main difference however is that we now allow for more complicated expressions as outputs than just hypergeometric expressions. This has been more thoroughly discussed in an earlier section.

For both of the packages, the guessing system occasionally requires some hints. This can be given in the form of an optional last argument, where you list atoms that you might expect to appear in the answer. For example, if the summation converged to $e +e^{-1}$ you could just as well say that it converges to $2sinh(1)$. As a human, you would need to inspect the summation to see if there is a $(2n)!$ factor, which would indicate that the latter is a better way to interpret the number that it is converging to, and pass in the hint $[sinh(1)]$ as an argument.

\section{Future Directions}

While this procedure helps to solve more types of sums, there is very limited application of it to computing many more. Apart from its usefulness to solve sums that couldn't be gotten using the existing techniques, it also reexamines sums that could already be solved from a much, much more simple approach than Gosper's algorithm does. This simplicity is a benefit in and of itself, even though it does not have the same assurances of completeness that Gosper's algorithm does, it is no less rigorous in the correctness of its answer if it is able to find a solution. In principle, there is some selection of degrees and basic sequences that would cause this procedure to find a formula for a convergent sum. However, finding some finite bound on this search space, if one exists at all, is a good candidate for future investigation.

\chapter{Bunk Bed Conjecture}
\label{BBed}

\section{Introduction}

The bunk bed conjecture is one of the many problems in percolation which seems obvious, and yet is elusive. It was mentioned as early as 1985 by \cite{FirstBBC} in a slightly different form and more formally by \cite{SecondBBC} in which it is already acknowledged as folklore.

There are many notions of closeness in graphs, and it being true lends credence to a notion of closeness of random graphs which seems natural. In particular, we describe two vertices in a graph as being closer if they have higher probabilities of being connected. By making an identical copy of the graph that is connected and adjacent, by an intuitive sense, we would think of a vertex that is in the other copy as further away. In fact, in the conventional, non-random definition of distance in a graph it is precisely distance one more away. Since the copy of the graph is identical, it holds the most hope of not having some strange behavior introduced by the candidate notion of closeness in a random graph. Proving this conjecture confirms that considering likelihood of being in the same component as a notion of closeness agrees with this intuition. A different notion which also seems reasonable, where we look at the expected hitting time under a random walk does not agree with this \cite{SecondBBC}, even though at face value it seems like it would of been a reasonable notion of closeness.

As a very simple example, consider that we pick out graph $G$ to be a copy of $K_2$, then, when we compute $G\square K_2$, we get $C_4$, then out start vertex $s$ and our ending vertex $f$ are in the same component with probability $p + p^3 - p^4$ whereas $s$ and $f'$ lie in the same component with probability $p^2 + p^2 - p^4$, which is less than the former quantity regardless of the probability $p$ that any particular edge is retained because $1+p^2 \ge 2p$.

There has only been piecemeal progress in proving it. In \cite{BBedComplete} they show that the conclusion of the conjecture is true for complete graphs, and even then, only when we fix the probability of retaining an edge $p=\frac{1}{2}$. In \cite{BBedEquiv} they give a few simple relations between the original conjecture and related conjectures. In \cite{Generalizations} they build on these, relating the original conjecture to even more general seeming conjectures, via looking at some complicated generalizations of the conjecture to randomly directed graphs, they are able to recover that the original conclusion is true for outer planar graphs. Our approach does not depend on the structure of the original finite graph $G$.

\section{Definitions}

The Cartesian graph product $G_1 \square G_2$ is defined as the graph on the vertex set $V(G_1)\times V(G_2)$, where there is an edge between $(u,v)$ and $(y,w)$ if either
\begin{itemize}
\item{ $u=y$ and there is an edge between $v$ and $w$ in $G_2$  \bf{or}}
\item{ $v=w$ and there is an edge between $u$ and $y$ in $G_1$}
\end{itemize}

For this problem we will concern ourselves just where the case that $G_2=K_2$ with vertex set $\{0,1\}$. We will write $\{(v,0)\}_{v\in G_1}$ as $G$ andl $\{(v,1)\}_{v\in G_1}$ as $G'$. Similarly, for $v\in G$, we'll let $v'\in G'$ be the vertex that is obtained by flipping the 0 to a 1. Also, for convenience, let $v'' = v$. Our graph product places an edge between them. We'll call an edge going between $G$ and $G'$ an outside edge. The goal is going to be to prove that the probability $s\in V(G)$ is connected to $f\in V(G)$ is greater than or equal to the probability $s\in V(G)$ is connected to $f' \in V(G')$. As mentioned in \cite{Generalizations} we may assume that the outside edge from $s$ to $s'$ and from $f$ to $f'$ are absent. Assume this throughout. 

We will call a particular choice of which edges remain, and which do not, a configuration.

For each configuration of edges there will be some set of outside edges that remain.  We will show that conditioning on this set of outside edges, then the conclusion of the bunk bed conjecture still holds. That is, that the probability that $s$ is connected to $f$ is at least the probability that $s$ is connected to $f'$. Let $X\subset V(G)$ be the set of vertices that are incident to an outside edge that has not been removed. If we show that the conjecture holds for each possible $X$, then when we combine them together, weighting by the probability that that $X$ occurs, then we will of shown it for the original problem, not conditioned on a particular $X$. However, our approach only works for the situation that $|X|=2$, an improvement on that it is known for $|X|=1$.

We introduce the events $A_{u,v}$ for any $u,v\in V(G)$. We say that $A_{u,v}$ occurs if there is a path from $u$ to $v$ so that all of the path, except possibly the endpoints lie in $V(G)\setminus X$. Similarly define $A'_{u',v'}$ for paths going between vertices of $G'$. Notice that $\{A_{u,v}\}$ are positively correlated and $\{A'_{u',v'}\}$ are positively correlated, both by Harris's inequality \cite{Harris}. Also notice that since each $A_{u,v}$ and $A'_{u',v'}$ are either true or false based off of which edges remain from $E(G)$ and $E(G')$ respectively, they are independent. In fact, in order to maintain as much generality as possible, we will only be using they are not positively correlated, that is, for $A\subseteq \{A_{u,v}\}$ and $B\subseteq \{A'_{u',v'}\}$, we have $P(A | B) \le P(A)$. The last property that we will use frequently is a symmetry property, that is for any $S_1,S_2\subseteq\{A_{u,v}\}$, if $S_1'$ means just replacing each $A$ event in $S_1$ with the corresponding $A'$ event, then we have $P(S_1 | S_2) = P(S_1' | S_2')$.

Now, we will further condition the problem and show that the conclusion of the bunk bed conjecture still holds. Define the shadow of a path $p$ on $G\square K_2$ to be the walk obtained by replacing each $v'\in V(G')$ on the path with $v$, and removing any duplicates that arise because the path was traveling along an outside edge. Note that the shadow of a path is not necessarily a path, as the original path may use both a vertex in $x\in G$ and $x'$. Assume that there is some path $p$ from $s$ to either $f$ or $f'$ in $G\square K_2$ whose shadow is $p_s$. We are assuming that such a path exists because if no such path exists, then the configuration in question trivially has the conjecture hold in that both probabilities are zero. Given this path, look at its shadow in $G$,  call that path $s = y_0, y_1, \ldots, y_k=f$. Now, let $m = |\{y_i\}_i \cap X|$. Define $x_0=s$, $\{y_i\}_i \cap X = \{x_1,x_2 ,\ldots, x_m\}$, and finally $x_{m+1} = f$.


\section{Main Proof}

We will prove that the statement of the bunk bed conjecture is true if $|X|=2$

If $s$ is connected to no vertices in $X$, then there is no way for $s$ to be connected to $f'$, so the statement is trivially true.

Fix any subset $\emptyset \neq Y\subseteq X$ of vertices that are connected to $s$. To say that $s$ is connected to $Y$ is equivalent to requiring that there is some spanning tree $T = \{\{v_i,w_i\}\}_{i=1}^{|Y|}$ on the vertices $\{s\}\cup Y$, so that for each $i$, either $A_{v_i,w_i}$ or $A'_{v_i',w_i'}$ occurs. Also, any time that one of the two vertices is $s$, we need that it needs to be the $A$ event, not the $A'$ event that happens, because there is no outside edge on $s$. Then, we have that there is some $Y_2\subseteq Y$ so that $\{A_{s,y}\}_{y\in Y_2}$ occurs. We also see that for every $z \in Y$,

\[
P(A_{z,f} | \{A_{s,y}\}_{y\in Y_2}) \ge P(A_{z,f}) = P(A'_{z',f'}) \ge P(A'_{z',f'}|\{A_{s,y}\}_{y\in Y_2})
\]

The first inequality because of positive correlation, the second by symmetry. The last inequality is actually an equality for the statement of the bunk bed conjecture that we have mentioned so far because the edges remaining in one copy of the original graph do not influence the edges remaining in the other. We leave it as an inequality here because we will only have inequality in an extension mentioned later. 

All of the other events that we are conditioning on for vertices of $Y$ to be connected to $s$ are symmetric, because they are of the form $A_{v_i,w_i}$ or $A'_{v_i',w_i'}$ occurs. And since the asymmetry is already on the side of requiring more $A$ events than $A'$ events, it is at least as likely for each of these conditions that we have $A_{v_i,w_i}$ as $A'_{v_i',w_i'}$, which only further tips the scales for the $A$ events. Put more concretely, suppose that we have the events indexed by $I_1\subseteq [|Y|]$ occurring as $A$ events and not as $A'$ events, and $I_2 \subseteq [|Y|]$ occurring as $A'$ events but not as $A$ events. We know that $|I_1|\ge 1$ by above comments. Let $I_3$ be the times that both occur. Then, we have by positive correlation that:

For ease of notation, let $B_I = \{A_{v_i,w_i}\}_{i\in I}$ and $B'_I = \{ A'_{v'_i,w'_i}\}_{i\in I}$. Let $S_1$ be the indices of edges of $T$ incident to $s$, and $S_2 = [|Y|]\setminus S_1$. Then, put into these symbols, the event that $s$ reaches all of $Y$ is identical to there being a $S_1\neq \emptyset$ so that $B_{S_1} C_{S_2}$ happens.


\begin{lemma} \label{BBedLemma}
Suppose that we have two sets of $A$ events $D_1$ and $D_2$, then, for any $B_i$, $P(D_1 |(B_i\vee B_i') D_2) \ge P(D'_1 |(B_i\vee B_i') D_2)$
\end{lemma}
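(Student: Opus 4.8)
The plan is to clear the common denominator and push the whole comparison into the bottom copy $G$. Since the factor $P((B_i\vee B_i')D_2)>0$ is common to both sides, it suffices to prove $P(D_1(B_i\vee B_i')D_2)\ge P(D_1'(B_i\vee B_i')D_2)$. I would expand the disjunction by conditioning on whether the top event $B_i'$ occurs. Because every $A$-event is determined by the edges of $G$ and every $A'$-event by the edges of $G'$, the two families are independent, and $P(B_i')=P(B_i)$, $P(D_1'B_i')=P(D_1B_i)$, $P(D_1'\overline{B_i'})=P(D_1\overline{B_i})$ by the symmetry property. Carrying out the expansion and writing $\beta=P(B_i)$, the desired inequality becomes the purely bottom-copy statement
\begin{equation}
P(D_1 B_i D_2)+\beta\,P(D_1\overline{B_i}\,D_2)\;\ge\;P(D_1B_i)\,P(D_2)+P(D_1\overline{B_i})\,P(B_iD_2). \tag{$\dagger$}
\end{equation}
The only place independence is invoked is in factoring the mixed terms; under the weaker ``not positively correlated'' hypothesis used later these factorizations degrade in the favorable direction, so the $\ge$ stated in the lemma is exactly what survives.

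The key step is to prove $(\dagger)$ by symmetrizing over two independent copies of the bottom configuration. Let $\omega,\omega'$ be independent samples of the edge set of $G$, and for an event $E$ let $\mathbf{1}_E(\omega)\in\{0,1\}$ denote its indicator. Using independence of $\omega,\omega'$ together with $E[\mathbf{1}_{B_i}]=\beta$, a direct expansion shows that the difference of the two sides of $(\dagger)$ equals
\[
\tfrac12\,E\Big[\big(\mathbf{1}_{B_i}(\omega)\vee\mathbf{1}_{B_i}(\omega')\big)\big(\mathbf{1}_{D_1}(\omega)-\mathbf{1}_{D_1}(\omega')\big)\big(\mathbf{1}_{D_2}(\omega)-\mathbf{1}_{D_2}(\omega')\big)\Big],
\]
the expectation taken over the independent pair. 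Writing $\mathbf{1}_{B_i}(\omega)\vee\mathbf{1}_{B_i}(\omega')=1-\mathbf{1}_{\overline{B_i}}(\omega)\mathbf{1}_{\overline{B_i}}(\omega')$ and using that the first term alone contributes $2\,\mathrm{Cov}(D_1,D_2)$, this recasts $(\dagger)$ as the clean covariance-domination inequality
\[
\mathrm{Cov}(D_1,D_2)\;\ge\;P(\overline{B_i})^2\,\mathrm{Cov}\!\big(D_1,D_2\mid \overline{B_i}\big).
\]

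The hard part will be establishing this last inequality, and it is genuinely where the work lies. It is \emph{not} a one-line consequence of Harris's inequality, because conditioning on an event — even a monotone one — can destroy positive association: conditioning two independent fair bits on their disjunction makes them negatively correlated. I would therefore split on the sign of the conditional covariance. If $\mathrm{Cov}(D_1,D_2\mid\overline{B_i})\le 0$ the inequality is immediate, since $\mathrm{Cov}(D_1,D_2)\ge 0$ by Harris. In the remaining case I would condition each pair $(\omega,\omega')$ on its coordinatewise meet $\omega\wedge\omega'$ and join $\omega\vee\omega'$: on the ``split'' coordinates (those where the two configurations disagree) the two are exchanged with equal weight regardless of the edge probabilities, so the symmetrized expectation reduces to the same expression on the Boolean cube with the uniform measure under the complement pairing $T\mapsto S\setminus T$. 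There the difference factors $\mathbf{1}_{D_1}(T)-\mathbf{1}_{D_1}(S\setminus T)$ and $\mathbf{1}_{D_2}(T)-\mathbf{1}_{D_2}(S\setminus T)$ become genuinely monotone in $T$, and the whole difficulty is concentrated in controlling the non-monotone weight $\mathbf{1}_{B_i}(T)\vee\mathbf{1}_{B_i}(S\setminus T)$ against them. For this final correlation estimate the Ahlswede–Daykin four-function theorem is the natural tool, and verifying it carefully is the step I expect to be the main obstacle.
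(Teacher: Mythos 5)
Your reduction is sound as far as it goes: clearing the common factor $P((B_i\vee B_i')D_2)$, splitting the disjunction into disjoint pieces, and using independence and symmetry to land on the one-copy inequality $(\dagger)$ reproduces exactly the quantity the paper reaches by inclusion--exclusion (the two agree term by term once the paper's expansion is multiplied back through by $P(B_i)$), and your two-copy symmetrization identity and the resulting reformulation $\mathrm{Cov}(D_1,D_2)\ge P(\overline{B_i})^{2}\,\mathrm{Cov}(D_1,D_2\mid \overline{B_i})$ are both correct. The problem is that this is where your argument stops. Everything up to that point is bookkeeping with independence and symmetry; the covariance-domination inequality is the entire mathematical content of the lemma, and you explicitly defer it (``the main obstacle''), offering only an uncarried-out plan involving the meet/join decomposition and Ahlswede--Daykin. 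As written, the lemma is not proved.

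For comparison, the paper closes the argument by a chain of elementary bounds rather than a symmetrization: after the same expansion it invokes $P(D_1\mid B_iD_2)\ge P(D_1\mid B_i)$ and Harris in the form $P(D_1D_2)\ge P(D_1)P(D_2)$, after which the whole difference collapses to $\bigl(P(D_1\mid B_i)/P(D_1)-1\bigr)\bigl(P(D_2\mid B_i)/P(D_2)-1\bigr)$, a product of two factors each nonnegative by Harris. Note that the paper's step $P(D_1\mid B_iD_2)\ge P(D_1\mid B_i)$ is itself a conditional positive-association claim of precisely the kind you (rightly) warn does not follow from Harris alone --- your two-bits-conditioned-on-their-disjunction example is the standard illustration. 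So your diagnosis of where the difficulty sits is accurate, and your covariance formulation is arguably a cleaner target than the paper's; but to have a proof you must actually establish either your inequality $\mathrm{Cov}(D_1,D_2)\ge P(\overline{B_i})^{2}\,\mathrm{Cov}(D_1,D_2\mid \overline{B_i})$ or the conditional-association step for the specific connectivity events at hand, and neither is done in your write-up.
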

\begin{proof}

\begin{align*}
&P(D_1  (B_i\vee B_i') D_2) -  P(D'_1  (B_i\vee B_i') D_2)\\
&=P(D_1 B_i D_2 ) - P(D'_1 B_i D_2)   +  P(D_1 B'_i D_2) - P(D'_1 B'_i D_2) \\
&\,\,\,\,\,- P(D_1 B_iB_i' D_2) +   P(D'_1 B_iB_i' D_2)\\
&=P(D_1 B_i D_2 ) - P( B_i D_2)P(D'_1)   +  P(D_1 D_2)P(B'_i) - P( D_2) P(D'_1 B'_i) \\
&\,\,\,\,\,- P(D_1 B_i D_2)P(B_i') +   P( B_i D_2)P(D'_1B_i')\\
&=P(D_1 |B_i D_2  )P(D_2|B_i) P(B_i) - P(  D_2| B_i) P(B_i ) P(D_1)   \\
&\,\,\,\,\,+  P(D_1 D_2)P(B_i) - P( D_2) P(D_1  | B_i) P(B_i) \\
&\,\,\,\,\,-P(D_1 |B_i D_2  )P(D_2|B_i) P(B_i)P(B_i) +   P( D_2|B_i) P(B_i) P(D_1B_i|C_I)P(B_i)\\
\end{align*}

Everything has a factor of $P(B_i)$ so divide through by that, and combine the first and fifth terms.
\begin{align*}
&=P(D_1 |B_i D_2  )P(D_2|B_i)(1- P(B_i)) - P(  D_2| B_i) P(D_1)   \\
&\,\,\,\,\,+  P(D_1 D_2) - P( D_2) P(D_1  | B_i)  +   P( D_2|B_i) P(B_i) P(D_1B_i)\\
&\ge P(D_1 |B_i)P(D_2|B_i)(1- P(B_i)) - P(  D_2| B_i) P(D_1)   \\
&\,\,\,\,\,+  P(D_1 D_2) - P( D_2) P(D_1  | B_i)  +   P( D_2|B_i) P(B_i) P(D_1B_i)\\
&= P(D_1 |B_i )P(D_2|B_i)- P(  D_2| B_i) P(D_1)  +  P(D_1 D_2) - P( D_2) P(D_1  | B_i)  \\
&\ge P(D_1 |B_i )P(D_2|B_i)- P(  D_2| B_i) P(D_1)  +  P(D_1)P(D_2) - P( D_2) P(D_1  | B_i)  \\
\end{align*}

Dividing through by $P(D_1)P(D_2)$, and factoring, this expression becomes

\begin{align*}
&= \left(\frac{P( D_i | B_i)}{P( D_1 )} - 1\right)\left( \frac{P(D_2  |B_i)}{P(D_2 )} - 1\right)\\
&\ge 0
\end{align*}

Throughout these steps, we are making heavy use of the fact that $A$ events and $A'$ events are independent of one another, that $A$ events are positively correlated, and that $A$ events and $A'$ events are symmetric.

\end{proof}

We have that $s$ is connected to $f$ if at least one of $A_{s,f}, \{A_{z,f}\}_{z\in Y}$ occur, and it is connected to $f'$ if at least one of $\{A'_{z',f'}\}_{z'\in Y'}$ occur. This is because we know that the $s$ is not connected to any other vertices in $X$ than $Y$, and so any of the paths from $s$ to $f$ or $f'$ must last hit $X$ at a vertex in $Y$. Having the part of that hypothetical path that is past the last time that it is in $X$ is exactly captured by either an event in $\{A_{z,f}\}_{z\in Y}$ or an event in $A\{A'_{z',f'}\}_{z'\in Y'}$ being true. If $|Y|=1$, then we an just apply Harris's inequality. For the case of $|Y|=2$, then either $s$ is connected to both using edges in $G$ (just $A$ events), or there is just one vertex in $Y$ which is hit by an A event from $s$. The other one is connected to that vertex by either an $A$ event or an $A'$ event. This places us exactly in the conditions of Lemma \ref{BBedLemma} where $D_2$ is the $A$ event getting us to $Y$, $(B_i \wedge B_i')$ is what is connecting the two vertices of $Y$, and $D_1$ is any one of $A_{s,f}, \{A_{z,f}\}_{z\in Y}$.Since we are assuming that $s$ is connected to all of $Y$, we use our inequality over all $z\in Y$ to get the conclusion that it is at least as likely that $s$ is connected to $f$ as that $s$ is connected to $f'$. Since this is true for any choice $X$ and $Y\subseteq X$, then it is true when we instead do not condition on these things by just summing over all the choices for X and Y. 





\section{Extensions and Related Results}

While still conditioning on $X$, we cannot have a hope of proving that the inequality is strict. For example consider $G=P_3$, the path with three vertices, and let $X$ consist of the middle vertex. Then, picking $s$ and $f$ to be the ends of the path, we have equal likelihood that $s$ connected to $f$ as having $s$ connect to $f'$. 

This proof also gives us information on which values of $X$ will have the two probabilities be equal. In particular, we want that for each step of the induction, when we multiply by $P(L_{i+1}|L_i) - P(L_{i+1})$, we are not multiplying by zero. Everything thing else that we do would preserve a strict inequality that we start with since $(R_0)<1$. So, we need that for some path shadow $p_s$, for each $i$, the set of edges which $A_{x_{i-1},x_{i}}$ and $A_{x_i,x_{i+1}}$ are not disjoint. as we have a strictly positive correlation between two events that have some coin that they depend on in common.

\begin{conjecture}
A graph with a known collection of vertices with outside edges $X$ will satisfy the bunk bed conjecture strongly if and only if there is a path from $s$ to $f$ in the subgraph of $G$ induced by removing the vertices in $X$.
\end{conjecture}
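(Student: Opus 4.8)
The plan is to read ``satisfies the bunk bed conjecture strongly'' as the strict inequality $P(s\leftrightarrow f)>P(s\leftrightarrow f')$ (conditioned, as throughout, on the set $X$ of endpoints of retained outside edges) and to prove the biconditional by isolating the unique source of asymmetry between the two connection events. Recall from the main proof that $f$ is reached either directly inside the first copy, via $A_{s,f}$, or through some $z$ in the reached set $Y\subseteq X$, via $A_{z,f}$; whereas $f'$ can only be reached through $Y$, via some $A'_{z',f'}$, since the outside edge at $s$ is absent and so $s$ is never directly joined to $s'$. Thus $\{s\leftrightarrow f\}$ carries the extra disjunct $A_{s,f}$ with no primed counterpart, and this is the only place where the $G\leftrightarrow G'$ symmetry used in the proof can break. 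The first fact I would record is the elementary equivalence: since $s,f\notin X$, a path realizing $A_{s,f}$ is precisely a path from $s$ to $f$ whose interior avoids $X$, i.e. a path in the induced subgraph $G[V(G)\setminus X]$; hence $P(A_{s,f})>0$ if and only if such a path exists.

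I would prove both directions through a single device. Conditioned on $X$ and on the reached set $Y$, the event $\{s\leftrightarrow f\}$ decomposes as $A_{s,f}\vee\bigvee_{z\in Y}A_{z,f}$ over the reach structure, while $\{s\leftrightarrow f'\}$ is $\bigvee_{z\in Y}A'_{z',f'}$; the two differ only in that the terminal $A$-events are mirrored to $A'$-events, and in the extra disjunct $A_{s,f}$. The intended tool is the measure-preserving involution $\iota$ that swaps the edge-states of the two copies while fixing the outside edges, under which $A_{z,f}\leftrightarrow A'_{z',f'}$. If $\iota$ can be shown to carry the ``loss'' region $\{s\leftrightarrow f'\}\setminus\{s\leftrightarrow f\}$ into the ``profit'' region $\{s\leftrightarrow f\}\setminus\{s\leftrightarrow f'\}$, then it is a measure-preserving injection of the former into the latter, and the residue $A_{s,f}$ becomes the sole quantity governing whether the injection is strict.

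Granting this device, both directions fall out. Suppose a path in $G[V(G)\setminus X]$ exists. Consider the configuration $W$ in which $A_{s,f}$ holds and every edge of $G'$ is deleted: then $s\leftrightarrow f$, while $s\not\leftrightarrow f'$ because any route to $f'$ must cross an outside edge at some $x\in X$ and then travel inside the now-empty $G'$ (and $f\notin X$ has no outside edge). Thus $W$ lies in the profit region and $P(W)=P(A_{s,f})\,P(G'\ \mathrm{empty})>0$; moreover $\iota(W)$ has $G$ empty, isolating $s$, so $\iota(W)$ lies in neither region and $W$ is outside the image of $\iota$. Hence $P(\mathrm{profit})\ge P(\mathrm{loss})+P(W)>P(\mathrm{loss})$, giving the strict inequality. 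Conversely, if no such path exists then $A_{s,f}$ is impossible, the residue disappears, $\iota$ becomes a bijection between the profit and loss regions, and $P(s\leftrightarrow f)=P(s\leftrightarrow f')$; this recovers and explains the equality examples already noted (such as $G=P_3$ with $X$ the middle vertex), where removing $X$ disconnects $s$ from $f$.

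The step I expect to be the main obstacle is establishing that $\iota$ really maps the loss region into the profit region, i.e. that the terminal disjunct $A_{s,f}$ is the \emph{only} way to break equality. The difficulty is that the reachability of $Y$ from $s$ is genuinely asymmetric (it is forced to use at least one $A$-event at $s$, never an $A'$-event), and the edges used to reach $Y$ can overlap the edges used by the terminal segments $A_{z,f}$; this is exactly the positive-correlation defect that Lemma~\ref{BBedLemma} controls through the factors $\bigl(P(D_1\mid B_i)/P(D_1)-1\bigr)\bigl(P(D_2\mid B_i)/P(D_2)-1\bigr)$. Making $\iota$ measure-preserving on the correct conditional $\sigma$-algebra, and ruling out any ``hidden'' strictness that is manufactured purely by routing through $X$ rather than by the direct disjunct, is where the real work lies. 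I would use the existing computer verification for all graphs on six vertices to stress-test the claimed behavior of $\iota$ before committing to it.
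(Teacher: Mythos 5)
The statement you are proving is labeled a \emph{conjecture} in the paper: no proof is given there, only the $P_3$ example showing equality can occur and an informal remark about where strictness could be lost. So there is nothing to compare your argument against; it must stand on its own, and it does not, because the device on which both directions rest fails. The copy-swapping involution $\iota$ does \emph{not} carry $\{s\leftrightarrow f'\}\setminus\{s\leftrightarrow f\}$ into $\{s\leftrightarrow f\}\setminus\{s\leftrightarrow f'\}$. Mirroring a path from $s$ to $f'$ produces a path from $s'$ to $f$, and since the outside edge at $s$ is assumed absent, this does not connect $s$ to $f$. Concretely: let $\omega$ retain only a path from $s$ to $x_1$ inside $G$, the outside edge at $x_1$, and a path from $x_1'$ to $f'$ inside $G'$. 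Then $\omega$ is in your loss region, but in $\iota(\omega)$ the vertex $s$ is isolated, so $\iota(\omega)$ lies in neither region. Hence $\iota$ is not an injection from loss to profit, the inequality $P(\mathrm{profit})\ge P(\mathrm{loss})+P(W)$ has no basis, and the claimed bijection in the converse direction fails for the same reason. Note also that if such an injection existed for arbitrary $X$ it would prove the full bunk bed conjecture outright, which is a strong signal that this cannot be the right mechanism; the actual weak inequality for $|X|\le 2$ in the paper is obtained by a correlation argument (Lemma \ref{BBedLemma}), not by an injection.

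Two further cautions. For the forward direction, exhibiting the single configuration $W$ in the profit region proves nothing without the injection; a more promising route is to track where the chain of inequalities in the paper's main proof is strict and show that the extra disjunct $A_{s,f}$ forces strictness somewhere. For the converse direction, your claim that $A_{s,f}$ is the \emph{only} source of asymmetry is not justified and may be false: the paper's own discussion immediately preceding the conjecture suggests that strictness can also arise from consecutive events $A_{x_{i-1},x_i}$ and $A_{x_i,x_{i+1}}$ depending on a common edge, which is a condition independent of the existence of an $X$-avoiding $s$--$f$ path. Any proof of the ``only if'' direction has to rule this second mechanism out, and your argument does not address it.
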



Since we showed at least a weak inequality for any selection of $|X|=2$ for the cross edges, we can get strong inequality in the original statement of the conjecture if you assume at most . In the original bunk bed conjecture, we have a non-zero probability that $X = \emptyset$. For this $X$, then the probability that $s$ is connected to $f'$ is zero, while the probability that $s$ is connected to $f$ is positive because there is some chance that we retain a spanning tree of $G$, for example.

Our approach seems like it would also generalize to being able to prove the stronger conjecture (2.5) presented in \cite{Generalizations} (Still under the assumption of at most two outside edges). In this paper they introduce a model of randomness we loose independence between the random choices made on the edges of $G$ and on the edges of $G'$. Here, they flip coins randomly for one of the two copies of the graph, say $G$. Then, to determine the edges of $G'$, they include an edge if and only if it is not remain in $G$. Note that this will mean that $\{A_{u,v}\}$ and $\{A'_{u',v'}\}$ are negatively correlated. The statement of our lemma is actually able to be a bit simpler, since in this context, we have that $B_i$ and $B_i'$ are mutually exclusive, so that we do not have to track the term that we got in that proof from inclusion-exclusion.

In \cite{Generalizations} they also present a form of the conjecture formulated in terms of hypergraphs. It is doubtful that the approach that we have given here would be useful to proving that different conjecture, since there is not as simple of a way of breaking down the notion of ``connected to'' that they use in terms of the existence of paths of retained hyperedges. It seems like it might be fruitful to further investigate if any of these ideas here can be related to this similar statement of the conjecture.

By looking at polynomials produced by going back to the model where we randomly remove with probability $p$ and taking the difference of the likelihood that we are connected to $f$ and to $f'$, we are able to associate a polynomial to the graph in an interesting way. In particular, by a recursive approach, we are able to get that the formula depth for the polynomial is at most $O(E(G))$. That is, for each edge that we process, we either place p times the polynomial that we get from definitely including the edge plus $(1-p)$ times the polynomial from definitely excluding the edge. Finally as a base case, we associate a 1 to any configuration which has $s$ connected to $f$ but not $f'$. Associate a -1 to any configuration which has $s$ connected to $f'$ but not $f$. To any other configuration, associate 0. This allows us a somewhat quicker way that we used of verifying the conjecture for small graphs. Actually expanding out this polynomial to make sure that it is nonnegative for any p in $[0,1]$ still is a quite slow operation.

\section{Experimental Component}

As is always good when faced with a conjecture, it is worthwhile to check small cases, even if it seems obvious from an intuitive point of view. Using the techniques in this section, we checked all graphs up though size six, and may graphs of size 7. While it is always the hope to find a counterexample, as that is much easier to prove and resolve the question, there was no such counterexample. However, The procedure that we used spits out a polynomial that encodes information about the graph. That is, as a polynomial in $p$, what is the difference between the probability that $s$ and $f$ are connected and the probability that $s$ and $f'$ are connected. It is easy to notice the way in which the graph determines some of the polynomial considered. 

The procedure begins by taking two copies of the bunk bed graph. One copy will denote the edges that we have yet to flip a coin for, and the other the edges that are currently remaining. As a base case, if there is a path from $s$ to $f$ or $f'$ in the retained edges minus the edges left to flip, then we have that our probability in question is either $1$, $0$, or $-1$. Once we have obtained the polynomial describing the difference of these two probabilities, it is a very simple task of checking to make sure that it is non-negative on $[0,1]$. Of all the polynomials generated, it  is the case that they have a factor of $p$ to the length of the shortest path in $G$ between $s$ and $f$, and a factor of $1-p$ to the size of the smallest cut set between $s$ and $f$. The rest of the polynomial in all graphs of size at most six is irreducible. This seems kind of surprising, and so is definitely worthy of further investigation.

\section{Maple Package}

An implementation of the procedure used for checking small cases of the conjecture is available at  {\tt http://sites.math.rutgers.edu/\~{}ajl213/DrZ/Bunkbed/bunkbed.txt}. There is documentation there on how to use the package. Just type `Help()' to see an overview of the available functions provided.


\end{document}